\title{A Fourth-Order, Multigrid Cut-Cell Method For Solving Poisson's
  Equation in Three-Dimensional Irregular Domains}
\author{YIXIAO QIAN \thanks{School of Mathematical Sciences,
    Zhejiang University, 866 Yuhangtang Road, Haina Complex Building 2,
    HangZhou, Zhejiang, 310058 China.
    These authors equally contributed to the work and should be
    considered co-first authors.
  }
  \and WEIZHEN LI \footnotemark[1]
  \and YAN TAN \thanks{School of Mathematical Sciences,
    Zhejiang University, 866 Yuhangtang Road, Haina Complex Building 2,
    HangZhou, Zhejiang, 310058 China.}
\and QINGHAI ZHANG \thanks{(Corresponding author)
  School of Mathematical Sciences,
  Zhejiang University, 866 Yuhangtang Road, Haina Complex Building 2,
  HangZhou, Zhejiang, 310058 China (\url{qinghai@zju.edu.cn}).}
}
\newsavebox{\@brx}
\newcommand{\llangle}[1][]{\savebox{\@brx}{\(\m@th{#1\langle}\)}%
  \mathopen{\copy\@brx\mkern2mu\kern-0.9\wd\@brx\usebox{\@brx}}}
\newcommand{\rrangle}[1][]{\savebox{\@brx}{\(\m@th{#1\rangle}\)}%
  \mathclose{\copy\@brx\mkern2mu\kern-0.9\wd\@brx\usebox{\@brx}}}
\newcommand{\algorithmicprecondition}{\textbf{ Precondition:}} 
\newcommand{\PreCondition}{\item[\algorithmicprecondition]}
\newcommand{\algorithmicpostcondition}{\textbf{ Postcondition:}} 
\newcommand{\PostCondition}{\item[\algorithmicpostcondition]}
\newcommand{\algorithmicsideeffect}{\textbf{ Side effects:}} 
\newcommand{\SideEffect}{\item[\algorithmicsideeffect]}
\newcommand{\algorithmicbreak}{\textbf{break}}
\newcommand{\BREAK}{\STATE \algorithmicbreak}
\newcommand{\PNGDIR}{./png/}
\newcommand{\TIKZDIR}{./tikz/}
\definecolor{color1}{RGB}{145,30,180}
\definecolor{color2}{RGB}{245,130,48}
\definecolor{color3}{RGB}{230,25,75}
\begin{document}
\maketitle

\begin{abstract}
  We propose a fourth-order cut-cell method for solving
  Poisson's equations in three-dimensional irregular domains.
  Major distinguishing features of our method include
  (a) applicable to arbitrarily complex geometries,
  (b) high order discretization,
  (c) optimal complexity.
  Feature (a) is achieved by Yin space,
  which is a mathematical model for three-dimensional continua.
  Feature (b) is accomplished by poised lattice generation (PLG) algorithm,
  which finds stencils near the irregular boundary for polynomial fitting.
  Besides, for feature (c), we design a modified multigrid solver
  whose complexity is theoretically optimal by applying
  nested dissection (ND) ordering method.
\end{abstract}

\begin{keywords}
  Poisson's equations,
  irregular domains,
  fourth order,
  cut-cell method,
  poised lattice generation,
  multigrid,
  optimal complexity.
\end{keywords}

\begin{MSCcodes}
  35J05, 65N55, 74S10
\end{MSCcodes}

\section{Introduction}
\label{sec:introduction}

In this article,
we consider the three-dimensional Poisson's equation
\begin{equation}
  \label{eq:poissonEquation}
  \Delta \varphi = f, \quad \text{in} ~ \Omega,
\end{equation}
where $\varphi: \mathbb{R}^3 \rightarrow \mathbb{R}$
is the unknown function,
and $\Omega$ is a bounded and connected domain in $\mathbb{R}^3$.
Poisson's equation, which is a fundamental
elliptic partial differential equation,
has broad applications in numerous scientific and engineering problems,
such as electrostatics, fluid dynamics, and thermal analysis.
For instance, in the field of fluid mechanics,
solving the incompressible Navier-Stokes equations (INSE)
via projection methods
\cite{brown2001accurate,johnston2004accurate,
  liu2007stability,zhang2014fourth,zhang2016gepup}
involves solving multiple Poisson's equations
with different boundary conditions.
Accurately and efficiently solving these Poisson's equations
in three-dimensional irregular domains
is vital for advancing simulations and analysis in these areas.

Numerous classical numerical methods
have been developed for solving~(\ref{eq:poissonEquation})
in rectangular domains, whether two-dimensional or three-dimensional.
However, most real-world problems are highly complex,
making it challenging to directly apply these conventional methods.
There is an urgent need for
developing advanced numerical techniques
capable of handling the complex computational domain boundaries.

One popular approach is the finite element method (FEM),
which is known for its high adaptability, flexibility and accuracy.
FEM employs unstructured grids to partition the domain
into subregions, such as triangles,
offering the ability to accurately represent
complex geometries and boundary conditions.
However,
these unstructured grids demand the storage of more information
compared to structured grids,
resulting in increased memory overhead.
Furthermore, the non-continuous nature of information storage
diminishes the efficiency of memory access.
FEM is also highly mesh-dependent \cite{brenner2008mathematical},
but generating high-order conforming mesh representations
for complex three-dimensional domains is both challenging and costly.
Another widely favored approach for handling complex geometries
is the immersed boundary method (IBM)
\cite{peskin1972flow,peskin2002immersed,
  tseng2003ghost,verzicco2023immersed}
based on finite-difference schemes.
This method embeds the irregular boundary into a
Cartesian structured grid
without performing Boolean operations.
Boundary conditions are enforced
by adding a volumetric forcing term into the governing equations,
either explicitly or implicitly.
Although IBM offers flexibility and simplicity
in managing complex geometries,
maintaining accuracy and stability near arbitrarily complex boundaries,
particularly in high Reynolds number flows,
remains challenging.
Additionally, IBM is strongly problem-dependent and
typically associated with low-order accuracy.

The cut-cell method, also known as
the Cartesian grid method or embedded boundary (EB) method,
provides an alternative by embedding irregular domains
within a regular Cartesian grid
and generating cut cells through the intersection of
cell boundaries with the geometric boundary.
EB method retains the simplicity of Cartesian grid
while adapting to complex geometries.
It can take advantage of many well-established techniques
from finite difference or finite volume methods,
such as high-order conservative schemes
for incompressible flows \cite{morinishi1998fully},
the multigrid algorithm \cite{briggs2000multigrid}
for elliptic equations,
and AMR algorithms \cite{dezeeuw1993adaptively, pember1995adaptive}.
But meanwhile, for high-order discretization,
several related issues still require effective solutions.
For instance, the cut-cell method often encounters challenges such as
degraded accuracy at the embedded boundaries
and instability caused by the small cut-cell problem
\cite{berger2021state, giuliani2022weighted}.
Furthermore, achieving optimal-complexity solvers
for the corresponding discrete linear systems
remains an active area of research.

Second-order cut-cell methods have been successfully employed to solve
Poisson's equations
\cite{gibou2005fourth,johansen1998cartesian,schwartz2006cartesian},
heat equations
\cite{mccorquodale2001cartesian,schwartz2006cartesian}
and Navier-Stokes equations
\cite{kirkpatrick2003representation,trebotich2015adaptive}.
Recently, Devendran et al. developed
a fourth-order EB method for Poisson's equations
\cite{devendran2017fourth},
and Overton-Katz et al. introduced a fourth-order EB method
for unsteady Stokes equations
\cite{overton2023fourth}.
They utilize weighted least squares to derive formulas for
high-order discretizations.
However, these methods do not provide a general framework
for generating stencils
and lack the flexibility to be easily extended to
arbitrarily complex geometries.
Additionally,
most existing approaches depend on the multigrid solver implemented
by EBChombo \cite{colella2014ebchombo}.
And there is an absence of comprehensive complexity analysis for their multigrid solvers.

Notably, our research group
has proposed a novel fourth-order cut-cell method
\cite{li2024fourthorder}
designed for two-dimensional Poisson's equations.
This method showcases
the ability to handle arbitrarily complex domains
while employing a multigrid solver with optimal complexity.
In this study,
we build upon this method,
extending it to three-dimensional Poisson's equations
while preserving its core strengths.

The above discussion motivates questions as follows:

\begin{enumerate}[label=(Q-\arabic*)]
\item Given arbitrarily complex computational domains,
  is there an accurate and efficient representation of such domains?
  \label{Q:boundaryRepresentation}
\item Cut cells with a small volume fraction
  may induce stability issues.
  Is it possible to devise
  an effective merging algorithm
  to address this challenge?
  \label{Q:processMerging}
\item Conventionally, achieving a high-order discretization
  of differential operators requires specialized techniques
  and complex computations.
  Is it feasible to design a high-order discretization method
  with low computational cost
  that can be applied to arbitrarily complex domains?
  \label{Q:processDiscretization}
\item Is there a viable strategy to
  solve the discrete linear system efficiently and
  with theoretically optimal complexity?
  \label{Q:solveResultingSystem}
\end{enumerate}
In this paper, we provide positive answers to
all the above challenges by presenting a
fourth-order cut-cell method for solving Poisson's equations
in three-dimensional irregular domains,
with extensibility to constant-coefficient elliptic equations.

For \ref{Q:boundaryRepresentation},
in the two-dimensional case,
Li, Zhu and Zhang \cite{li2024fourthorder}
make use of the theory of two-dimensional Yin space
\cite{zhang2020boolean},
in which each Yin set has a simple and accurate representation
that facilitates geometric and topological queries
via polynomial spline curves.
Similarly, in the three-dimensional case,
we employ the three-dimensional Yin space theory
\cite{zhang2024boolean}.
In specific, when dealing with the irregular boundaries
of the computational domain,
we utilize the least squares method
to fit piecewise quadratic polynomial surfaces for their approximation.
Then the Boolean intersection operation
of Yin space is applied to determine
the accurate representation of each cut cell.

For \ref{Q:processMerging},
we develop a systematic algorithm for merging the small cells
that have a volume fraction below a user-specified threshold.
Specifically, we pay special attention to the case of
\emph{multi-component} cells,
where a single cell comprises multiple connected components.

For \ref{Q:processDiscretization},
the discretization method from \cite{li2024fourthorder}
based on the poised lattice generation (PLG) algorithm \cite{PLG}
is implemented.
The PLG algorithm
generates stencils to fit complete multivariate polynomials
via weighted least squares method,
enabling high-order discretization of linear differential operators.
This method is applicable to various boundary conditions
and nonlinear differential operators.

For \ref{Q:solveResultingSystem},
we modify the multigrid components as described in
\cite{li2024fourthorder}
to adapt to irregular domains
by coupling the smoothing operator with LU factorization.
The optimal complexity of the modified multigrid algorithm
is theoretically demonstrated,
which,
while trivial in two-dimensional case,
presents challenges in three dimensions.
To achieve optimal complexity, the nested dissection ordering method
\cite{george1973nested,karypis1998fast,lipton1979generalized}
is applied to renumber the cells near embedded boundaries,
thereby efficiently reducing the complexity of the LU factorization for
the matrix block corresponding to these cells.

Despite significant advancements in solving the Poisson's equations
within three-dimensional irregular domains, existing methodologies often
fall short in addressing all four critical challenges
identified in this research.
To the best of our knowledge, no single method in the literature
has successfully and simultaneously tackled all four challenges
in a comprehensive and efficient manner.
By systematically addressing each of these problems,
the novel approach proposed in this study represents
a meaningful advancement in the field,
offering a promising framework that can pave the way
for more accurate and robust solutions to Poisson's equations
in three-dimensional complex geometries,
with broad applicability across diverse fields.


\section{Roadmap}
\label{sec:roadmap}

In this section,
we provide an overview of our method,
leaving additional details in subsequent sections.

\subsection{Yin Space}
\label{sec:yin-space}

To establish a solid foundation for
describing continua's complex topology,
large geometric deformations, and topological
changes such as merging in the context of multiphase flow,
\emph{Yin space},
a mathematical modeling space,
was proposed for continua with
two-dimensional \cite{zhang2020boolean}
and three-dimensional \cite{zhang2024boolean}
arbitrarily complex topology.

\begin{definition}[Yin space \cite{zhang2024boolean}]
  A \emph{Yin set} $\mathcal{Y}$ in $\mathbb{R}^3$ is a
  regular open semianalytic set
  whose boundary is bounded.
  The class of all such Yin sets constitutes the \emph{Yin space} $\mathbb{Y}$.
\end{definition}

\begin{theorem}[Zhang and Li \cite{zhang2020boolean}]
  \label{thm:yinSetsFormABooleanAlgebra}
   The algebra
   $\mathbf{Y}:=({\mathbb Y},\ \cup^{\perp\perp},\ \cap,\ \, ^{\perp},\
   \emptyset,\ \mathbb{R}^3)$
   is a Boolean algebra.
\end{theorem}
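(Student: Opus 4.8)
The plan is to read the claim as an instance of the classical theorem that the regular open subsets of any topological space form a complete Boolean algebra, and then to verify that the three defining properties of a Yin set are each preserved by the stated operations, so that $\mathbb{Y}$ is realized as a subalgebra of $\mathrm{RO}(\mathbb{R}^3)$. First I would unwind the notation and identify the operations with their counterparts in $\mathrm{RO}(\mathbb{R}^3)$: the meet $\cap$ is ordinary intersection, the complement ${}^\perp$ is the exterior $A^\perp = \mathrm{int}(A^c) = (\overline{A})^c$, and the join $\cup^{\perp\perp}$ is the regularized union $A \cup^{\perp\perp} B = \mathrm{int}(\overline{A \cup B})$, with $\emptyset$ and $\mathbb{R}^3$ serving as bottom and top. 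Since $\mathrm{RO}(\mathbb{R}^3)$ equipped with these operations is already known to satisfy all the Boolean-algebra axioms (commutativity, associativity, distributivity, the identity laws, and complementation), the entire burden of the proof collapses to a single task: showing that $\emptyset, \mathbb{R}^3 \in \mathbb{Y}$ and that $\mathbb{Y}$ is closed under $\cap$, ${}^\perp$, and $\cup^{\perp\perp}$. Since a subset of a Boolean algebra that contains $0$ and $1$ and is closed under its three operations is itself a Boolean algebra, this would finish the argument.

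The closure of the regular-open property is classical: intersections of regular open sets are regular open, the exterior of a regular open set is regular open, and the regularized union is regular open by construction. For the boundedness of the boundary I would use the elementary inclusions $\partial(A \cap B) \subseteq \partial A \cup \partial B$, $\partial(A^\perp) = \partial A$, and, for the join, the chain $\partial\bigl(\mathrm{int}(\overline{A \cup B})\bigr) = \partial(\overline{A \cup B}) \subseteq \partial(A \cup B) \subseteq \partial A \cup \partial B$, where the first equality uses that $\mathrm{int}(\overline{A \cup B})$ is regular open with the same closure as $A \cup B$. In each case the boundary of the output is contained in a finite union of the boundaries of the inputs, hence bounded whenever those are. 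The trivial checks $\partial \emptyset = \partial \mathbb{R}^3 = \emptyset$ handle the two distinguished elements.

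The remaining, and genuinely substantive, step is closure under semianalyticity, and this is where I expect the main obstacle to lie. Whereas semianalytic sets are readily closed under finite unions, intersections, and set-theoretic complement, the operations here additionally involve the topological closure and interior hidden inside the regularization $\cdot^{\perp\perp}$ and the exterior ${}^\perp$. I would therefore invoke the structural theory of semianalytic sets --- in particular the fact, due to \L ojasiewicz, that the closure, and hence the interior and the exterior, of a semianalytic set is again semianalytic --- to conclude that $A \cap B$, $A^\perp = (\overline{A})^c$, and $\mathrm{int}(\overline{A \cup B})$ all remain semianalytic. Combining the three closure properties with the identity-element checks exhibits $\mathbb{Y}$ as a subalgebra of $\mathrm{RO}(\mathbb{R}^3)$ and completes the proof. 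The delicate point throughout is the join, since it is there that regular openness, bounded boundary, and semianalyticity must all be re-established after a non-obvious topological operation.
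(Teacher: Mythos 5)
The paper does not prove this theorem at all: it is imported verbatim, with a citation, from Zhang and Li \cite{zhang2020boolean}, so there is no internal argument to compare against. Your outline is nevertheless correct and follows the same strategy as the cited source: exhibit $\mathbb{Y}$ as a subalgebra of the complete Boolean algebra $\mathrm{RO}(\mathbb{R}^3)$ of regular open sets by checking that $\emptyset$ and $\mathbb{R}^3$ are Yin sets and that each of the three defining properties (regular openness, bounded boundary via your boundary inclusions, semianalyticity via \L ojasiewicz's closure theorem) is preserved under $\cap$, ${}^{\perp}$, and $\cup^{\perp\perp}$.
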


\begin{definition}
  \label{def:gluedSurface}
  A \emph{glued surface} is a compact 2-manifold or its quotient space,
  whose
  quotient map glues the compact manifold along the subsets homeomorphic to
  a one-dimensional CW complex, and its complement has exactly two connected
  components.
\end{definition}

\begin{theorem}
  \label{thm:yinsetRep}
  For a Yin set $\mathcal{Y} \neq \emptyset, \mathbb{R}^{3}$, its
  boundary can be uniquely decomposed into several glued surfaces, which
  can be further oriented such that
  \begin{equation*}
    \mathcal{Y} = \bigcup\nolimits^{\perp\perp}_{j}
    \mathop{\bigcap}\limits_{i}
    \mathrm{int}(\mathcal{S}_{j,i}),
  \end{equation*}
where $j$ is the index of connected components of $\mathcal{Y}$ and
$\mathcal{S}_{j,i}$'s are oriented glued surfaces without pairwise proper
intersections.
\end{theorem}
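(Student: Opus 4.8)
The plan is to build the representation in two layers: first split $\mathcal{Y}$ into its connected components, and then, for each component, decompose that component's boundary into oriented glued surfaces. Since $\mathcal{Y}$ is open, its connected components $\{\mathcal{Y}_j\}_j$ are open, pairwise disjoint, and satisfy $\mathcal{Y}=\bigcup_j \mathcal{Y}_j$; because each $\mathcal{Y}_j$ is itself regular open semianalytic with bounded boundary, regularizing the union (using the operations of Theorem~\ref{thm:yinSetsFormABooleanAlgebra}) reproduces $\mathcal{Y}=\bigcup\nolimits^{\perp\perp}_j \mathcal{Y}_j$. I would then reduce the theorem to the single-component statement that every connected Yin set equals a finite intersection $\bigcap_i \mathrm{int}(\mathcal{S}_{j,i})$ of interiors of oriented glued surfaces, so that from here on all the work is local to one component.

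Next I would establish the boundary decomposition. Because $\mathcal{Y}$ is regular open, $\partial\mathcal{Y}=\partial(\overline{\mathcal{Y}})$ is a compact semianalytic set, so by semianalytic (Whitney) stratification it is a finite union of analytic manifolds; the regular-open hypothesis forces the top-dimensional stratum to be $2$-dimensional and rules out isolated lower-dimensional pieces floating free of the surface part. The key structural point is that, away from a $1$-dimensional singular set $\Sigma$, $\partial\mathcal{Y}$ is a smooth $2$-manifold that is locally two-sided with $\mathcal{Y}$ on one side, whereas along $\Sigma$ the boundary may pinch or fold where $\mathcal{Y}$ touches itself. I would show that tracing the manifold part through $\Sigma$ and taking closures assembles $\partial\mathcal{Y}$ into finitely many compact pieces, each of which is, by Definition~\ref{def:gluedSurface}, a glued surface: a compact $2$-manifold quotiented along the $1$-complex $\Sigma$, whose complement has exactly two components.

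With the glued surfaces $\mathcal{S}_{j,i}$ in hand, I would use the two-component property of Definition~\ref{def:gluedSurface} to orient each one, declaring $\mathrm{int}(\mathcal{S}_{j,i})$ to be the component on whose side $\mathcal{Y}_j$ locally lies. The identity $\mathcal{Y}_j=\bigcap_i \mathrm{int}(\mathcal{S}_{j,i})$ then follows from a Jordan--Brouwer-type separation argument: any $p\in\mathcal{Y}_j$ lies on the interior side of every bounding surface simultaneously, while any $p\notin\overline{\mathcal{Y}_j}$ fails at least one such constraint, the orientation convention handling cavities by flipping which component is declared interior. That the $\mathcal{S}_{j,i}$ have no pairwise proper intersections is then read directly off the construction: two distinct glued surfaces can meet only within the shared singular set $\Sigma$, which is $1$-dimensional, so the meeting is improper.

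Finally I would settle uniqueness by characterizing the decomposition intrinsically: the singular set $\Sigma$, the smooth manifold part $\partial\mathcal{Y}\setminus\Sigma$, and the local side on which $\mathcal{Y}$ sits are all determined by $\mathcal{Y}$ alone, so any two glued-surface decompositions must agree piece by piece. I expect the main obstacle to be the boundary-decomposition step. Unlike the planar case of \cite{zhang2020boolean}, where self-contact occurs only at isolated points and Jordan curves thread unambiguously, here $\Sigma$ is a $1$-complex along which several sheets may fold or cross; proving that the manifold pieces close up into well-defined glued surfaces, and that each resulting quotient has a complement with exactly two components, requires careful control of the local structure of $\partial\mathcal{Y}$ along $\Sigma$.
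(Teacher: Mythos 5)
The paper never proves this theorem itself: it is imported verbatim from the three-dimensional Yin-space work \cite{zhang2024boolean}, so your attempt has to stand on its own, and as written it is an outline rather than a proof. The reduction to connected components and the observation that the regularized union of the (pairwise disjoint, regular open) components reproduces $\mathcal{Y}$ are fine. The genuine gap is exactly the step you flag yourself: you never prove that the closure of the manifold part of $\partial\mathcal{Y}$ assembles into glued surfaces in the sense of Definition~\ref{def:gluedSurface}. That requires (i) a canonical rule for pairing the $2$-dimensional sheets that meet along the singular $1$-complex $\Sigma$ (several sheets can meet along a single curve of $\Sigma$, and different pairings yield different candidate ``glued surfaces''), (ii) a proof that each resulting object is a compact $2$-manifold or a quotient of one glued along a one-dimensional CW complex, and (iii) a proof that its complement has exactly two connected components. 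None of this follows from Whitney stratification alone: stratification hands you the strata, not the way they close up, and property (iii) is the defining condition of a glued surface, not something that can be read off. Since existence, the separation identity, and uniqueness all rest on this decomposition, deferring it leaves the argument without its core.

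Two further steps are asserted rather than established. First, in $\mathcal{Y}_j=\bigcap_i \mathrm{int}(\mathcal{S}_{j,i})$ the inclusion $\subseteq$ is easy (a connected $\mathcal{Y}_j$ lies in one complementary component of each of its bounding surfaces), but the reverse inclusion needs an actual argument that every component of $\mathbb{R}^3\setminus\partial\mathcal{Y}_j$ other than $\mathcal{Y}_j$ violates at least one orientation constraint; ``Jordan--Brouwer-type separation'' is not available off the shelf here because glued surfaces are not embedded manifolds, and how the several surfaces $\mathcal{S}_{j,i}$ jointly partition space (e.g.\ via an adjacency or parity argument across surfaces) is precisely what must be proved. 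Second, uniqueness is reduced to the phrase ``intrinsically characterized,'' but the nonobvious point is that the sheet pairing along $\Sigma$ in (i) is forced by $\mathcal{Y}$; without that, two different assemblies of the same strata could a priori both satisfy the conclusion. Finally, your claim that a $1$-dimensional intersection is automatically not a \emph{proper} intersection is an assumption about a term the statement does not define; it should be derived from the definition used in \cite{zhang2024boolean}, since two surfaces can genuinely cross along a curve.
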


In \cite{zhang2024boolean},
all surface patches forming glued surfaces are triangular.
To achieve higher accuracy and smoothness,
these triangular patches can be replaced with
polynomial surfaces, Bézier surfaces or B-spline surfaces.
In this paper,
we employ polynomial surfaces generated through least squares fitting
to construct the Yin sets,
as detailed in Section \ref{sec:geometric-characterization}.

\subsection{Grid Construction}

Let $\Omega\in\mathbb{Y}$ denote
the three-dimensional computational domain,
and $R$ be a rectangular region enclosing $\Omega$,
which is
uniformly partitioned into a collection
of rectangular cells defined by
\begin{equation*}
  C_{\mathbf{i}} = \Big(\mathbf{x}_O + \mathbf{i}h,
  \mathbf{x}_O + (\mathbf{i} + \mathbbm{1})h\Big),
\end{equation*}
where $\mathbf{x}_O$ is a fixed origin in the coordinate system,
$h$ represents the uniform spatial step size,
$\mathbf{i} \in \mathbb{Z}^3$
is a multi-index and $\mathbbm{1}\in\mathbb{Z}^3$
is the multi-index with
all components equal to one.
The upper and lower faces of the cell $C_\mathbf{i}$
along the $d$-th dimension are respectively denoted by
\begin{align*}
  F_{\mathbf{i} + \frac{1}{2}\mathbf{e}^d}
  &= \Big(\mathbf{x}_O + (\mathbf{i} + \mathbf{e}^d)h,
    \mathbf{x}_O + (\mathbf{i} + \mathbbm{1})h\Big),\\
  F_{\mathbf{i} - \frac{1}{2}\mathbf{e}^d}
  &= \Big(\mathbf{x}_O, \mathbf{x}_O +
    (\mathbf{i} + \mathbbm{1} - \mathbf{e}^d)h\Big),
\end{align*}
where $\mathbf{e}^d \in \mathbb{Z}^D$ is a multi-index with $1$
as its $d$-th component and $0$ otherwise.

Embedding $\Omega$ into the Cartesian grid $R$,
we define the cut cells by
\begin{equation*}
  \mathcal{C}_{\mathbf{i}} := C_{\mathbf{i}} \cap \Omega,
\end{equation*}
the cut faces by
\begin{equation*}
  \mathcal{F}_{\mathbf{i} + \frac{1}{2}\mathbf{e}^d}
  := F_{\mathbf{i} + \frac{1}{2}\mathbf{e}^d} \cap \Omega,
  \mathcal{F}_{\mathbf{i} - \frac{1}{2}\mathbf{e}^d}
  := F_{\mathbf{i} - \frac{1}{2}\mathbf{e}^d} \cap \Omega,
\end{equation*}
and the irregular boundary surfaces
(i.e., the portion of domain boundary contained in cut cells) by
\begin{equation*}
  \mathcal{S}_{\mathbf{i}} := C_{\mathbf{i}} \cap \partial \Omega.
\end{equation*}
Let $\|\mathcal{C}_{\mathbf{i}}\|$ denote
the volume of $\mathcal{C}_{\mathbf{i}}$,
and
$\|\mathcal{F}_{\mathbf{i} + \frac{1}{2}e^{\mathbf{d}}}\|,
\|\mathcal{S}_{\mathbf{i}}\|$
denote the area of
$\mathcal{F}_{\mathbf{i}+\frac{1}{2}e^{\mathbf{d}}},
\mathcal{S}_{\mathbf{i}}$ respectively.
Particularly,
$\mathcal{C}_{\mathbf{i}}$ is said to be an \emph{interior cell}
if $\mathcal{C}_{\mathbf{i}} = C_{\mathbf{i}}$,
an \emph{exterior cell} if $\mathcal{C}_{\mathbf{i}} = \emptyset$,
and a \emph{cut cell} otherwise.

\subsection{Spatial Discretization}

Consider the discretization of the equation (\ref{eq:poissonEquation}) 
with boundary condition
\begin{equation}
  \label{eq:poissonBoundaryCondition}
  \mathcal{N} \varphi = g, \quad \text{on} ~ \partial \Omega,
\end{equation}
where $\mathcal{N}$ represents the boundary condition operator.
For instance, $\mathcal{N} = \mathcal{I}$ for Dirichlet conditions,
$\mathcal{N} = \frac{\partial}{\partial \mathbf{n}}$ for Neumann conditions,
and $\mathcal{N} = \gamma_1 + \gamma_2 \cdot \frac{\partial}{\partial \mathbf{n}}
(\gamma_1, \gamma_2 \in \mathbb{R})$ for Robin conditions.

Denote the cell-averaged value of
a scalar function $\varphi$ over cell $\mathcal{C}_{\mathbf{i}}$ by
\begin{equation*}
  \langle \varphi \rangle_{\mathbf{i}} =
  \frac{1}{\|\mathcal{C}_{\mathbf{i}}\|} \int_{\mathcal{C}_{\mathbf{i}}}
  \varphi(\mathbf{x}) \mathrm{d} \mathbf{x},
\end{equation*}
the face-averaged value of $\varphi$ over the
face $\mathcal{F}_{\mathbf{i} + \frac{1}{2}\mathbf{e}^d}$ by
\begin{equation*}
  \langle \varphi \rangle_{\mathbf{i} + \frac{1}{2}\mathbf{e}^d}
  = \frac{1}{\|\mathcal{F}_{\mathbf{i}
      + \frac{1}{2}\mathbf{e}^d}\|}\int_{\mathcal{F}_{\mathbf{i}
      + \frac{1}{2}\mathbf{e}^d}}
  \varphi(\mathbf{x})\mathrm{d} \mathbf{x},
\end{equation*}
and the face-averaged value of $\varphi$ over the
irregular boundary surface $\mathcal{S}_{\mathbf{i}}$ by
\begin{equation*}
  \llangle \varphi \rrangle_{\mathbf{i}} =
  \frac{1}{\|\mathcal{S}_{\mathbf{i}}\|} \int_{\mathcal{S}_{\mathbf{i}}}
  \varphi(\mathbf{x}) \mathrm{d} \mathbf{x}.
\end{equation*}

For a cell $\mathcal{C}_{\mathbf{i}}$,
if none of the cells within the set
$\{\mathcal{C}_{\mathbf{k}}: \mathbf{k} = \mathbf{i},
\mathbf{i} \pm \mathbf{e}^d,
\mathbf{i} \pm 2\mathbf{e}^d, d = 0,1,2\}$
contain any irregular boundary surfaces
(i.e., they are all interior cells),
then standard formulas can be applied to
derive the discrete Laplacian operator
\begin{equation}
  \label{eq:regularLaplaceDiscretization}
  \langle\Delta \varphi\rangle_{\mathbf{i}}=
  \frac{1}{12 h^2} \sum_d
  \Big( -\langle\varphi\rangle_{\mathbf{i}+2 \mathbf{e}^d}
    +16\langle\varphi\rangle_{\mathbf{i}+\mathbf{e}^d}
    -30\langle\varphi\rangle_{\mathbf{i}}
    +16\langle\varphi\rangle_{\mathbf{i}-\mathbf{e}^d}
    -\langle\varphi\rangle_{\mathbf{i}
    -2 \mathbf{e}^d} \Big) +\mathrm{O}\left(h^4\right).
\end{equation}
For cells near the regular boundaries,
ghost cells (see \cite{zhang2012fourth})
are filled based on specific boundary condition
to facilitate above standard discretization schemes.
Particularly, for a Dirichlet boundary condition where
$\langle \varphi \rangle_{\mathbf{i} + \frac{1}{2}\mathbf{e}^d}
= \langle g\rangle_{\mathbf{i} + \frac{1}{2}\mathbf{e}^d}$,
the ghost cell values are filled with
\begin{align*}
  \langle\varphi\rangle_{\mathbf{i}+\mathbf{e}^{d}}
  &=\frac{1}{12}\left(3\left\langle\varphi\right\rangle_{\mathbf{i}-3\mathbf{e}^{d}}
    -17\left\langle\varphi\right\rangle_{\mathbf{i}-2\mathbf{e}^{d}}
    +43\left\langle\varphi\right\rangle_{\mathbf{i}-\mathbf{e}^{d}}
    -77\left\langle\varphi\right\rangle_{\mathbf{i}}
    +60\left\langle\varphi\right\rangle_{\mathbf{i}
      +\frac{1}{2}\mathbf{e}^{d}}\right)+\mathrm{O}(h^{5}),\\
  \left\langle\varphi\right\rangle_{\mathbf{i}+2\mathbf{e}^{d}}
  &=\frac{1}{12}\left(27\left\langle\varphi\right\rangle_{\mathbf{i}-3\mathbf{e}^{d}}
    -145\left\langle\varphi\right\rangle_{\mathbf{i}-2\mathbf{e}^{d}}
    +335\left\langle\varphi\right\rangle_{\mathbf{i}-\mathbf{e}^{d}}
    -505\left\langle\varphi\right\rangle_{\mathbf{i}}
    +75\left\langle\varphi\right\rangle_{\mathbf{i}+\frac{1}{2}\mathbf{e}^{d}}\right)
  +O(h^{5}).
\end{align*}
Similarly, for a Neumann boundary condition with
$\langle \frac{\partial \varphi}{\partial x_d} \rangle_{\mathbf{i} + \frac{1}{2}\mathbf{e}^d}
= \langle g\rangle _{\mathbf{i} + \frac{1}{2}\mathbf{e}^d}$,
fourth-order interpolation yields
\begin{align*}
  \langle\varphi\rangle_{\mathbf{i}+\mathbf{e}^{d}}
  &=\frac{1}{10}\left(\langle\varphi\rangle_{\mathbf{i}-3\mathbf{e}^{d}}
    -5\left\langle\varphi\right\rangle_{\mathbf{i}-2\mathbf{e}^{d}}
    +9\left\langle\varphi\right\rangle_{\mathbf{i}-\mathbf{e}^{d}}
    +5\left\langle\varphi\right\rangle_{\mathbf{i}}
    +12h\left\langle\frac{\partial\varphi}{\partial\mathbf{n}}\right\rangle
    _{\mathbf{i}+\frac{1}{2}\mathbf{e}^{d}}\right)+\mathrm{O}(h^{5}),\\
  \langle\varphi\rangle_{\mathbf{i}+2\mathbf{e}^{d}}
  &=\frac{1}{2}\left(3\left\langle\varphi\right\rangle_{\mathbf{i}-3\mathbf{e}^{d}}
    -15\left\langle\varphi\right\rangle_{\mathbf{i}-2\mathbf{e}^{d}}
    +29\left\langle\varphi\right\rangle_{\mathbf{i}-\mathbf{e}^{d}}
    -15\left\langle\varphi\right\rangle_{\mathbf{i}}
    +12h\left\langle\frac{\partial\varphi}{\partial\mathbf{n}}\right\rangle
    _{\mathbf{i}+\frac{1}{2}\mathbf{e}^{d}}\right)+\mathrm{O}(h^{5}).
\end{align*}

Although the standard discrete Laplacian operator for inner cells
can be derived straightforwardly from
(\ref{eq:regularLaplaceDiscretization}),
obtaining high-order discretization for cells
around the irregular boundaries is
significantly more challenging
due to the complexity of the boundaries.
Following the approach presented in
Section 3 of \cite{li2024fourthorder},
we undertake the following steps to
derive the high-order discretization.

Firstly, the finite-volume poised lattice generation
(FV-PLG, see Section \ref{sec:poised-lattice-generation})
technique is employed to establish
a stencil for polynomial interpolation.
Given a cell $\mathcal{C}_{\mathbf{i}}$ around the irregular boundaries,
FV-PLG method generates a collection of sites $\mathcal{X}(\mathbf{i})$
near $\mathcal{C}_{\mathbf{i}}$ for polynomial fitting in $\Pi_n^D$.
This set $\mathcal{X}(\mathbf{i})$ can be expressed as
\begin{equation*}
  \mathcal{X}(\mathbf{i}) =
  \left\{ \mathcal{C}_{\mathbf{j}_1}, \cdots,
    \mathcal{C}_{\mathbf{j}_N} \right\}
  \cup
  \left\{ \mathcal{S}_{\mathbf{j}_{N+1}},\cdots,
    \mathcal{S}_{\mathbf{j}_{N + N^{\prime}}} \right\}.
\end{equation*}

Secondly, the identified stencil $\mathcal{X}(\mathbf{i})$
is used to perform a local $D$-variable polynomial fitting.
Specifically,
a complete $n$-degree polynomial with $D$-variable is constructed as
\begin{equation*}
  p(\mathbf{x}) = \sum\limits_{j = 1}^N
  \alpha_j \phi_j(\mathbf{x}) \in \Pi_n^D,
\end{equation*}
where $\Pi_n^D$ is the vector space of all D-variate polynomials
of degree no more than $n$ with real coefficients,
$\{\phi_j\}_{j = 1}^N$ constitutes a basis of $\Pi_n^D$,
and the coefficient vector
$\bm{\alpha} = [\alpha_1,\cdots,\alpha_n]^T$
is the solution of the weighted least squares problem
\begin{equation*}
  \min \limits_{\bm{\alpha}} \sum\limits_{k = 1}^N \omega_k
  \left| \langle p \rangle_{\mathbf{j}_k}
    - \langle\varphi\rangle_{\mathbf{j}_k} \right|^2
  + \sum\limits_{k = N+1}^{N + N^{\prime}} \omega_k
  \left| \llangle \mathcal{N} p \rrangle _{\mathbf{j}_k}
    - \llangle \mathcal{N} \varphi\rrangle_{\mathbf{j}_k} \right|^2,
\end{equation*}
where $\omega_k$ depends on the relative position
between $\mathcal{C}_{\mathbf{j}_k}$ and $\mathcal{C}_{\mathbf{i}}$.

Finally, applying the Laplacian operator over $p(\mathbf{x})$
yields the approximation, i.e.,
\begin{equation}
  \label{eq:irregularLaplaceDiscretization}
  \langle \mathcal{L} \varphi \rangle_{\mathbf{i}}
  = \langle \mathcal{L} p \rangle_{\mathbf{i}} + O(h^{n-1}).
\end{equation}
In this paper, we fit polynomials of degree 4, which yield $O(h^3)$
truncation error and $O(h^4)$ solution error.

It is worth noting that small cells
significantly impact the robustness of
the approximation and the linear solver.
To address this issue,
we employ a merging algorithm to merge small cells into larger ones,
as demonstrated in Section \ref{sec:merging-algorithm}.

\subsection{Discrete Poisson's Equation}

By coupling the fourth-order difference formula
(\ref{eq:regularLaplaceDiscretization})
with the FV-PLG approximation (\ref{eq:irregularLaplaceDiscretization}),
we ultimately derive the discretization
of (\ref{eq:poissonEquation}) with boundary condition
(\ref{eq:poissonBoundaryCondition}) as
\begin{equation*}
  L\hat{\varphi} + N\hat{g} = \hat{f},
\end{equation*}
where $\hat{\varphi}, \hat{f}$ denote the vectors of
cell-averaged values of the function $\varphi$ and $f$ respectively,
$\hat{g}$ represents the vector of boundary face-averaged values
corresponding to the boundary condition $g$,
and $L, N$ are both matrix operators.
It can be transformed into a residual form as
\begin{equation*}
  L\hat{\varphi} = \hat{r} := \hat{f} - N\hat{g},
\end{equation*}
which can be further partitioned into two row blocks:
\begin{equation}
  \label{eq:multigridLinearSystem}
  \left[
    \begin{array}{cc}
      L_{11}&L_{12}\\
      L_{21}&L_{22}
    \end{array}
  \right] \left[
    \begin{array}{c}
      \hat{\varphi}_1\\
      \hat{\varphi}_2
    \end{array}
  \right] = \left[
    \begin{array}{c}
      \hat{r}_1\\
      \hat{r}_2
    \end{array}
  \right],
\end{equation}
where the splitting
$\hat{\varphi} = [\hat{\varphi}_1, \hat{\varphi}_2]^T$
is based on the type of discretization.
If the regular difference formula
(\ref{eq:regularLaplaceDiscretization})
is applied to $\mathcal{C}_{\mathbf{i}}$,
then the cell-average $\langle \varphi \rangle_{\mathbf{i}}$
is contained in $\hat{\varphi}_1$;
otherwise, it is included in $\hat{\varphi}_2$.
As a result,
$L_{11}$ exhibits a regular structure similar to
that obtained by directly applying standard
discretizations of Poisson's equations in regular domains,
and other matrix blocks $L_{12},L_{21},L_{22}$ has no more explicit
structures beyond sparsity.

In this paper, we employ the multigrid method
to solve the linear system (\ref{eq:multigridLinearSystem}).
However, it is notable that the FV-PLG discretization
prohibits the direct application of
traditional geometric multigrid methods.
On the one hand, the Gauss-Seidel or (weighted) Jacobi
iterations do not guarantee convergence
due to the indefinite and asymmetrical structures
of $L_{12},L_{21},L_{22}$ in
(\ref{eq:multigridLinearSystem}).
On the other hand,
simple grid-transfer operators cannot directly be
applied near the irregular boundary,
as the cells' volumes are non-uniform.
We introduce a modified version of the geometric multigrid method
to address these limitations.
Additionally, we demonstrate that our modified multigrid method
achieves optimal complexity,
as detailed in Section \ref{sec:multigrid}.


\section{Geometric Characterization}
\label{sec:geometric-characterization}

\subsection{Boundary Fitting}

We adopt piecewise quadratic polynomial surfaces to approximate
the boundary $\partial\Omega$ of the computational domain.
Inside every cut cell $\mathcal{C}_{\mathbf{i}}$,
a selection of points is made from $\partial \Omega$,
and a quadratic polynomial surface $w = p(u,v)$ is fitted
by solving a least squares problem,
where $u,v,w$ represent a permutation of the three axes $x,y,z$.
The region enclosed by these approximating surfaces
is denoted as $\Omega^{\prime}$,
which is the approximation of $\Omega$ in $\mathbb{Y}$.

\begin{theorem}
  \label{thm:leastSquaresFittingError1D}
  Consider a function $f\in\mathcal{C}^3\big([a_0,b_0]\big)$.
  If $N (N \geq 3)$ points $\{x_i\}_{i = 1}^N$
  are distributed in $[a_0, b_0]$
  and employed in a least squares fit
  for the quadratic polynomial $p(x) = ax^2 + bx + c$,
  the resulting approximation satisfies
  \begin{equation*}
    f(x) = p(x) + O(h^3),\ \forall x\in[a_0,b_0],
  \end{equation*}
  where $h = b_0 - a_0$.
\end{theorem}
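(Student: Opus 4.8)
The plan is to compare the least-squares quadratic $p$ against the second-order Taylor polynomial of $f$ centered at the midpoint of $[a_0,b_0]$, and then to show that the least-squares fit cannot amplify the resulting $O(h^3)$ pointwise discrepancy once the problem is rescaled to a reference interval of unit length. First I would fix $x_0=\frac{1}{2}(a_0+b_0)$ and set $q(x)=f(x_0)+f'(x_0)(x-x_0)+\frac{1}{2}f''(x_0)(x-x_0)^2$. Taylor's theorem with the Lagrange remainder, together with the continuity (hence boundedness) of $f'''$ on the compact set $[a_0,b_0]$, gives $f(x)-q(x)=\frac{1}{6}f'''(\xi_x)(x-x_0)^3$ for some $\xi_x$, so that $|f(x)-q(x)|\le \frac{1}{6}\|f'''\|_\infty h^3$ uniformly for $x\in[a_0,b_0]$. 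In particular the nodal residuals $R_i:=f(x_i)-q(x_i)$ obey $|R_i|=O(h^3)$.

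Second, I would reduce the claim to a statement about $e:=p-q$, which is again a quadratic. Because $q$ is fixed, minimizing $\sum_{i=1}^N |p(x_i)-f(x_i)|^2$ over all quadratics $p$ is equivalent to minimizing $\sum_{i=1}^N |e(x_i)-R_i|^2$ over all quadratics $e$; that is, $e$ is precisely the least-squares quadratic fitted to the data $\{(x_i,R_i)\}$. It therefore suffices to prove $|e(x)|=O(h^3)$ uniformly on $[a_0,b_0]$, since then $f-p=(f-q)-e=O(h^3)$ pointwise.

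Third, I would rescale by $t=(x-a_0)/h\in[0,1]$ and $t_i=(x_i-a_0)/h\in[0,1]$, writing $e$ as a quadratic $\tilde e(t)=c_2 t^2+c_1 t+c_0$. The normal equations for the fit read $V^\top V\,\mathbf{c}=V^\top\mathbf{R}$, where $V$ is the $N\times 3$ Vandermonde matrix with rows $(t_i^2,t_i,1)$ and $\mathbf{R}=(R_1,\dots,R_N)^\top$. The entries of $V^\top V$ are sums of powers of the $t_i\in[0,1]$ and hence depend only on the normalized nodes, not on $h$; provided the configuration contains at least three distinct $t_i$, $V$ has full column rank and $(V^\top V)^{-1}$ is bounded by a constant independent of $h$. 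Consequently $\|\mathbf{c}\|=\|(V^\top V)^{-1}V^\top\mathbf{R}\|=O(\|\mathbf{R}\|_\infty)=O(h^3)$, and since $t\in[0,1]$ the three monomials are uniformly bounded, giving $|e(x)|=|\tilde e(t)|=O(h^3)$ on the whole interval. Combining with the second step completes the argument.

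The main obstacle is the uniform-in-$h$ bound on $(V^\top V)^{-1}$ in the third step: this is exactly the conditioning of the rescaled least-squares problem, and it is where the distribution of the sample points enters. The constant hidden in $O(h^3)$ degrades if the normalized nodes cluster or become nearly collinear, so the argument implicitly requires the $N\ge 3$ points to be non-degenerately distributed (at least three distinct nodes with separation bounded below after rescaling). Securing a lower bound on the smallest singular value of $V$ — rather than the Taylor estimate, which is routine — is the crux of making the statement fully rigorous.
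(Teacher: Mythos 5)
Your proof is correct (modulo the node non-degeneracy caveat you yourself flag at the end), but it follows a genuinely different route from the paper's. The paper works directly on the unscaled normal equations: it applies Cramer's rule to $A^TA[a,b,c]^T = A^TF$, substitutes the Taylor expansions of the $f(x_i)$ into the resulting determinants, and extracts the coefficient-wise asymptotics $a=\frac{1}{2}f''(0)+O(h)$, $b=f'(0)+O(h^2)$, $c=f(0)+O(h^3)$, from which the pointwise bound follows by subtracting the Taylor expansion of $f(x)$. You instead exploit linearity of the least-squares projection: subtract the midpoint Taylor polynomial $q$, observe that $e=p-q$ is exactly the least-squares fit of the $O(h^3)$ residuals $R_i$, and control $e$ by rescaling to $[0,1]$ so that the Gram matrix becomes $h$-independent. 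Your route buys a clean separation between smoothness (Taylor) and stability (conditioning of the rescaled Vandermonde matrix), and it makes explicit a hypothesis that both proofs actually need: the paper's step ``$\det(A^TA)=O(h^6)$'' is only an upper bound, whereas dividing by the determinant in Cramer's rule requires a matching lower bound $\det(A^TA)\ge c\,h^6$, which is precisely the uniform non-degeneracy of the rescaled nodes that you isolate as the crux. The paper's route, in turn, buys the explicit coefficient estimates (\ref{eq:fittingCoef}), and these are not a throwaway byproduct: they are invoked again in the proof of Theorem \ref{thm:leastSquaresSurfaceAreaError} to bound $|\phi_f^y-\phi_p^y|$. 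Your argument recovers them as well, but only after unpacking the rescaling (the coefficients of $e$ in the original variable are $O(h)$, $O(h^2)$, $O(h^3)$ for the quadratic, linear, and constant terms respectively), a step you would need to write out if your proof were to replace the paper's and still support the later geometric estimates.
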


\begin{proof}
  Without loss of generality, we consider the interval to be $[0, h]$.
  The least squares solution $[a, b, c]^T$
  satisfies the normal equations:
    \begin{equation*}
        A^TA \begin{bmatrix}
            a\\
            b\\
            c
          \end{bmatrix} = A^T F,\  \text{where} \ A =
          \begin{bmatrix}
            x_1^2 & x_1 & 1\\
            \vdots & \vdots & \vdots\\
            x_N^2 & x_N & 1
          \end{bmatrix}
          ,\ F=\begin{bmatrix}
            f(x_1)\\
            \vdots\\
            f(x_N)
           \end{bmatrix}.
    \end{equation*}
    According to matrix multiplication and the Cramer's rule, we have
    \begin{align}
      \label{eq:cramerA}
        &\quad\quad\quad A^TA = \begin{bmatrix}
            \sum x_i^4 & \sum x_i^3 & \sum x_i^2\\
            \sum x_i^3 & \sum x_i^2 & \sum x_i\\
            \sum x_i^2 & \sum x_i & \sum 1
           \end{bmatrix},\quad\quad\quad
        a = \frac{1}{\mathrm{det}(A^TA)} \mathrm{det}\begin{bmatrix}
            \sum x_i^2f(x_i) & \sum x_i^3 & \sum x_i^2\\
            \sum x_i f(x_i) & \sum x_i^2 & \sum x_i\\
            \sum f(x_i) & \sum x_i & \sum 1
           \end{bmatrix},\\
      \label{eq:cramerB}
        b &= \frac{1}{\mathrm{det}(A^TA)} \mathrm{det}\begin{bmatrix}
            \sum x_i^4 & \sum x_i^2f(x_i) & \sum x_i^2\\
            \sum x_i^3 & \sum x_i f(x_i) & \sum x_i\\
            \sum x_i^2 & \sum f(x_i) & \sum 1
           \end{bmatrix},
        c = \frac{1}{\mathrm{det}(A^TA)} \mathrm{det}\begin{bmatrix}
            \sum x_i^4 & \sum x_i^3 & \sum x_i^2f(x_i)\\
            \sum x_i^3 & \sum x_i^2 & \sum x_if(x_i)\\
            \sum x_i^2 & \sum x_i & \sum f(x_i)
           \end{bmatrix}.
    \end{align}
    Then we get the estimation $\mathrm{det}(A^TA) = O(h^6)$.
    By Taylor's theorem, we have
    \begin{align}
      \label{eq:taylorExpansionOff(x)}
        f(x) &= f(0) + f'(0)x + \frac{1}{2}f''(0)x^2 + O(h^3),\ \forall x\in[0, h],\\
        \label{eq:taylorExpansionOff(x_i)}
        f(x_i) &= f(0) + f'(0)x_i + \frac{1}{2}f''(0)x_i^2 + O(h^3),\ \forall i.
    \end{align}
    Substituting $(\ref{eq:taylorExpansionOff(x)})$ and
    $(\ref{eq:taylorExpansionOff(x_i)})$
    into equations $(\ref{eq:cramerA})$ and $(\ref{eq:cramerB})$,
    we arrive at
    \begin{equation}
      \label{eq:fittingCoef}
      a = \frac{1}{2}f^{\prime\prime}(0) + O(h),
      \quad b = f^{\prime}(0) + O(h^2),
      \quad c = f(0) + O(h^3).
    \end{equation}
    Therefore, we have
    \begin{align*}
      ax^2 + bx + c - f(x) &= \Big( \frac{1}{2}f^{\prime\prime}(0) + O(h)\Big)x^2
                             + \Big(f^{\prime}(0) + O(h^2)\Big) x
                             + f(0) + O(h^3)\\
                           &- \Big( f(0) + f^{\prime}(0)x
                             + \frac{1}{2}f^{\prime\prime}(0)x^2
                             + O(h^3) \Big)\\
                           &=O(h^3), \forall x\in[0, h].
    \end{align*}
\end{proof}

Using the same logical reasoning applied in
Theorem \ref{thm:leastSquaresFittingError1D},
we can derive an analogous conclusion for the two-dimensional case.

\begin{corollary}\label{coro:leastSquaresFittingError2D}
  Let $f\in\mathcal{C}^3\big([a_0,a_0+h]\times[b_0,b_0+h]\big)$,
  where $h \in \mathbb{R}^+$.
  By selecting $N$ points $\{(x_i,y_i)\}_{i = 1}^N$ within the rectangle
  $[a_0, a_0+h]\times[b_0,b_0+h]$
  and employing the $\{(x_i,y_i,f(x_i,y_i))\}_{i = 1}^N$
  data set for least squares fitting of
  a quadratic polynomial $p(x,y) = ax^2+bxy+cy^2+dx+ey+g$,
  we have
  \begin{equation*}
    f(x,y) = p(x,y) + O(h^3),
    \forall (x,y) \in[a_0,a_0+h]\times[b_0,b_0+h].
  \end{equation*}
\end{corollary}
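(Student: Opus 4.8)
The plan is to replicate the Cramer's-rule argument of Theorem~\ref{thm:leastSquaresFittingError1D}, now with the six-dimensional coefficient vector $\bm\alpha=[a,b,c,d,e,g]^T$ and the monomial basis $\{x^2,xy,y^2,x,y,1\}$. Without loss of generality I would translate the rectangle to $[0,h]\times[0,h]$ with the origin at a corner, write the design matrix $A\in\mathbb{R}^{N\times 6}$ with rows $[x_i^2,x_iy_i,y_i^2,x_i,y_i,1]$, and assemble the normal equations $A^TA\bm\alpha=A^TF$ with $F=[f(x_1,y_1),\dots,f(x_N,y_N)]^T$. The natural target is the second-order Taylor polynomial of $f$ about the origin, whose coefficients I will denote $a^*=\tfrac12 f_{xx}(0,0)$, $b^*=f_{xy}(0,0)$, $c^*=\tfrac12 f_{yy}(0,0)$, $d^*=f_x(0,0)$, $e^*=f_y(0,0)$, $g^*=f(0,0)$; since $f\in\mathcal{C}^3$, Taylor's theorem gives $f(x,y)=a^*x^2+b^*xy+c^*y^2+d^*x+e^*y+g^*+O(h^3)$ uniformly on the square.

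The cleanest way to carry out the estimates, and to sidestep a direct assault on the $6\times 6$ determinants, is to rescale $x=h\xi$, $y=h\eta$ with $(\xi,\eta)\in[0,1]^2$. This factors $A=\tilde A D$, where $D=\mathrm{diag}(h^2,h^2,h^2,h,h,1)$ collects the monomial scales and $\tilde A$ has rows $[\xi_i^2,\xi_i\eta_i,\eta_i^2,\xi_i,\eta_i,1]$ depending only on the normalized, $h$-independent node configuration. The normal equations then collapse to $\tilde A^T\tilde A\,(D\bm\alpha)=\tilde A^TF$, so that $D\bm\alpha=(\tilde A^T\tilde A)^{-1}\tilde A^TF$. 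Writing $F=\tilde A D\bm\alpha^*+\bm r$ with $\bm\alpha^*=[a^*,b^*,c^*,d^*,e^*,g^*]^T$ and $\|\bm r\|_\infty=O(h^3)$ (the entrywise Taylor remainder), I obtain $D(\bm\alpha-\bm\alpha^*)=(\tilde A^T\tilde A)^{-1}\tilde A^T\bm r$. Reading this componentwise against the scales in $D$ yields exactly the two-dimensional analogue of (\ref{eq:fittingCoef}): the quadratic coefficients satisfy $a-a^*,\,b-b^*,\,c-c^*=O(h)$, the linear ones $d-d^*,\,e-e^*=O(h^2)$, and the constant $g-g^*=O(h^3)$.

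Finally I would assemble the pointwise error $p(x,y)-f(x,y)$ by pairing each coefficient discrepancy with its monomial: $(a-a^*)x^2$ is $O(h)\cdot O(h^2)=O(h^3)$, and likewise for the $xy$ and $y^2$ terms; $(d-d^*)x$ and $(e-e^*)y$ are $O(h^2)\cdot O(h)=O(h^3)$; $(g-g^*)$ is $O(h^3)$; and the leftover Taylor remainder is $O(h^3)$. Summing gives $p(x,y)-f(x,y)=O(h^3)$ on the whole square. The main obstacle is not any single estimate but the \emph{well-posedness} hidden in the step $D\bm\alpha=(\tilde A^T\tilde A)^{-1}\tilde A^TF$: the bound $\|(\tilde A^T\tilde A)^{-1}\tilde A^T\|=O(1)$ requires the normalized matrix $\tilde A$ to have full column rank $6$ uniformly in $h$, i.e.\ the $N$ sample points must be unisolvent for $\Pi_2^2$ and must not degenerate toward a conic configuration as $h\to 0$. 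The one-dimensional proof buries the analogous requirement in the assertion $\det(A^TA)=O(h^6)$ together with an implicitly bounded inverse; in two dimensions I would make this hypothesis explicit, either as a standing nondegeneracy condition on the point distribution or by appealing to the poisedness supplied by the sampling strategy used elsewhere in the paper.
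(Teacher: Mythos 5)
Your proposal is correct and arrives at exactly the coefficient hierarchy the paper relies on, but by a technically different device. The paper proves Corollary~\ref{coro:leastSquaresFittingError2D} only by reference to Theorem~\ref{thm:leastSquaresFittingError1D}, i.e., it implicitly repeats the Cramer's-rule computation: explicit determinant formulas for the coefficients, the estimate $\det(A^TA)=O(h^6)$, and substitution of the Taylor expansions to obtain the analogue of \eqref{eq:fittingCoef}, namely that the quadratic, linear, and constant coefficients are recovered to $O(h)$, $O(h^2)$, $O(h^3)$ respectively. You instead factor the design matrix as $A=\tilde A D$ with $D=\mathrm{diag}(h^2,h^2,h^2,h,h,1)$ and read the same hierarchy off the identity $D(\bm\alpha-\bm\alpha^*)=(\tilde A^T\tilde A)^{-1}\tilde A^T\bm r$ with $\|\bm r\|_\infty=O(h^3)$; the final assembly (coefficient error times monomial size plus Taylor remainder) is identical to the paper's. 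What your route buys is twofold. First, it avoids $6\times 6$ determinant bookkeeping, which in two dimensions would be far messier than the $3\times 3$ computation the paper actually writes out. Second, and more substantively, it exposes the hypothesis both arguments need but the paper never states: Cramer's rule requires not just the upper bound $\det(A^TA)=O(h^6)$ recorded in the paper but a matching lower bound, equivalently your condition $\|(\tilde A^T\tilde A)^{-1}\tilde A^T\|=O(1)$, i.e., the normalized point configuration must remain unisolvent for $\Pi_2^2$ uniformly as $h\to 0$. In one dimension this holds whenever at least three points stay well separated after rescaling, but in two dimensions it is a genuine restriction, since arbitrarily many distinct points may lie on a single conic and render $A^TA$ singular. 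Flagging this as an explicit nondegeneracy assumption on the sampling, as you do, is a necessary completion of the paper's argument rather than a deviation from it.
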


For any cut cell,
let $V_f$ denote the intersection region yielded by
the exact surface, whereas $V_{p}$ denotes
the corresponding region yielded by
the approximate least squares surface.
Furthermore, let $S_{f}$ and $S_{p}$ represent
the irregular boundary surfaces within $V_{f}$ and $V_{p}$, respectively.
For this particular boundary approximation,
we present evaluations of the area and
surface integral errors over $S_{f}$ and $S_{p}$,
as well as the volume and volume integral errors
within $V_{f}$ and $V_{p}$.

\begin{theorem}\label{thm:leastSquaresSurfaceAreaError}
  Consider a cut cell in the domain
  $\Omega_0 = [x_0, x_0+h]\times[y_0, y_0+h]\times[z_0,z_0+h]$.
  Let height function $f(x,y)$ represent
  the exact surface within this cell,
  and $p(x,y)$ denote its least squares approximation.
  The error in the surface area satisfies
  \begin{equation}
    \label{eq:leastSquaresSurfaceAreaError}
    \|S_f\| = \|S_p\| + O(h^4).
  \end{equation}
\end{theorem}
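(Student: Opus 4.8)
The plan is to express both areas as graph integrals over their projections onto the base plane and then to reduce the area difference to two pieces: a pointwise integrand difference, and a mismatch between the two projection domains. Writing $\Pi_f,\Pi_p\subseteq[x_0,x_0+h]\times[y_0,y_0+h]$ for the projections of $S_f,S_p$, one has
\begin{equation*}
  \|S_f\| = \iint_{\Pi_f}\sqrt{1+|\nabla f|^2}\,\mathrm{d}x\,\mathrm{d}y,
  \qquad
  \|S_p\| = \iint_{\Pi_p}\sqrt{1+|\nabla p|^2}\,\mathrm{d}x\,\mathrm{d}y.
\end{equation*}

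First I would upgrade Corollary~\ref{coro:leastSquaresFittingError2D} from a $\mathcal{C}^0$ bound to a gradient bound. Repeating the Cramer's-rule bookkeeping of Theorem~\ref{thm:leastSquaresFittingError1D} for the fit $p=ax^2+bxy+cy^2+dx+ey+g$ shows that the quadratic coefficients reproduce $\tfrac12 f_{xx},f_{xy},\tfrac12 f_{yy}$ up to $O(h)$ while the linear coefficients $d,e$ reproduce $f_x,f_y$ up to $O(h^2)$; since every monomial is evaluated at arguments of size $O(h)$ on the cell, this yields $\nabla p-\nabla f=O(h^2)$ pointwise, in addition to the known $p-f=O(h^3)$. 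As $f\in\mathcal{C}^3$ on a bounded cell, $\nabla f$ and $\nabla p$ are $O(1)$.

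I would then split $\|S_f\|-\|S_p\|$ into the integral over $\Pi_f\cap\Pi_p$ of the integrand difference, plus the two boundary integrals over $\Pi_f\setminus\Pi_p$ and $\Pi_p\setminus\Pi_f$. For the first piece, the identity
\begin{equation*}
  \sqrt{1+|\nabla f|^2}-\sqrt{1+|\nabla p|^2}
  = \frac{(\nabla f-\nabla p)\cdot(\nabla f+\nabla p)}
         {\sqrt{1+|\nabla f|^2}+\sqrt{1+|\nabla p|^2}}
\end{equation*}
bounds the integrand by $O(h^2)$, and since $|\Pi_f\cap\Pi_p|=O(h^2)$ this contribution is $O(h^4)$.

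The crux is the domain mismatch $\Pi_f\triangle\Pi_p$. A point lies there only when $f$ and $p$ straddle a horizontal face of the cell; as $|f-p|=O(h^3)$, such a point obeys $\mathrm{dist}\big(f(x,y),\{z_0,z_0+h\}\big)=O(h^3)$, so $\Pi_f\triangle\Pi_p$ is a thin tube around the level curves $\{f=z_0\}$ and $\{f=z_0+h\}$. Where $\partial\Omega$ meets a face transversally these curves have length $O(h)$ and the tube width is $O(h^3)/|\nabla f|$, so its area is $O(h^4)$; since the integrand is $O(1)$, the boundary integrals are $O(h^4)$ and~\eqref{eq:leastSquaresSurfaceAreaError} follows. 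I expect the main obstacle to be precisely this tube estimate in the near-tangential case, where $|\nabla f|$ degenerates along a level curve and the naive width bound fails; there one must either invoke transversality of $\partial\Omega$ to the Cartesian faces, so that $|\nabla f|$ is bounded below wherever a level curve is present, or exploit the cancellation between the oppositely signed contributions of $\Pi_f\setminus\Pi_p$ and $\Pi_p\setminus\Pi_f$.
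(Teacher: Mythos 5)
Your proposal follows essentially the same route as the paper's proof: the identical splitting of $\|S_f\|-\|S_p\|$ into the common projection $D_f\cap D_p$ (handled by the gradient estimate $\nabla f-\nabla p=O(h^2)$, which the paper likewise extracts from the coefficient estimates behind Corollary~\ref{coro:leastSquaresFittingError2D}) plus the symmetric difference of the projections, which both arguments bound by $O(h^4)$ by showing the intersection curves of the two surfaces with the cell faces are only $O(h^3)$ apart. The transversality obstacle you flag is present in the paper as well; it is sidestepped there by restricting ``without loss of generality'' to the generic configuration of its figure, in which the intersection curves are local graphs $x=\phi^y(y)$ or $y=\phi^x(x)$ and the factor $a\left(\phi_p^y+\phi_f^y\right)+by+d$ (essentially $p_x$ along the curve) is implicitly bounded away from zero.
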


\begin{proof}
  Let $D_f$ and $D_p$ denote the projection areas of $S_f$ and $S_p$
  onto the region $[x_0, x_0+h]\times[y_0, y_0+h]$, respectively.
  We have
  \begin{align*}
    &\left| ~ \left\|S_f\right\| - \left\|S_p\right\| ~ \right| \\
    =&\left|\int_{D_f} \sqrt{1+f_x^2+f_y^2}\mathrm{d}x\mathrm{d}y
      - \int_{D_p} \sqrt{1+p_x^2+p_y^2} \mathrm{d}x\mathrm{d}y\right|\\
    \le&\left| \int_{D_f \cap D_p}
        \frac{f_x^2+f_y^2 - p_x^2 -p_y^2}{\sqrt{1+f_x^2+f_y^2}
        + \sqrt{1+p_x^2+p_y^2}} \mathrm{d}x\mathrm{d}y\right|
        + \left|\int_{D_f \oplus D_p} O(1)\mathrm{d}x\mathrm{d}y \right|\\
    = &err_1 + err_2,
  \end{align*}
  According to Corollary \ref{coro:leastSquaresFittingError2D},
  we have $f_x(x,y) = p_x(x,y) + O(h^2)$ and
  $f_y(x,y) = p_y(x,y) + O(h^2)$.
  Hence, we obtain
  \begin{equation}
  \label{eq:err1Estimation}
    err_1 \leq O(h^2)\cdot \|S_{D_f \cap D_p}\| = O(h^4).
  \end{equation}
  For $D_f\oplus D_p$, consider the area enclosed
  by the intersection lines of the two surfaces with the planes
  $z = z_0$ and $z = z_0 + h$.
  Without loss of generality,
  we consider the scenario depicted in Figure
  \ref{fig:figureForThmLeastSquaresSurfaceError}.
  Let the local expressions of
  the intersection lines with respect to $x$ and $y$ be denoted as
  $\phi_f^x(x)$, $\phi_f^y(y)$, $\phi_p^x(x)$, and $\phi_p^y(y)$.
  We can estimate the area as follows:
  \begin{equation*}
    \|S_{D_f\oplus D_p}\|
    \le \int_{y_0}^{y^*} |\phi_f^y - \phi_p^y| \mathrm{d}y
    + \int_{x^*}^{x_0+h} |\phi_f^x - \phi_p^x|\mathrm{d}x.
  \end{equation*}
  For any $y\in(y_0,y^{*})$, since points $(\phi_p^y(y),y,z_0+h)$
  and $(\phi_f^y(y),y,z_0+h)$ lie on the intersection lines,
  we have
  \begin{equation}
    \label{eq:estimateErrorOfLines}
    z_0+h = p\left(\phi_p^y(y),y\right)
    = f \left(\phi_f^y(y),y\right)
    = p \left(\phi_f^y(y),y\right) + O(h^3),
  \end{equation}
  where the last step follows from
  Corollary \ref{coro:leastSquaresFittingError2D}.
  Using the Taylor expansion of $p(\phi_p^y(y),y)$,
  we get
  \begin{align}
    p \left(\phi_p^y(y),y\right)
      & = p \left(\phi_f^y(y),y\right)
        + p_x\left(\phi_p^y(y)-\phi_f^y(y)\right)
        + \frac{p_{xx}}{2}\left(\phi_p^y(y)-\phi_f^y(y)\right)^2
    \label{eq:expansionOfpEq1}\\
      & = p \left(\phi_f^y(y),y\right) + \left(\phi_p^y(y)-\phi_f^y(y)\right)
        \left[a\left(\phi_p^y(y) + \phi_f^y(y)\right) + by + d\right],
    \label{eq:expansionOfpEq2}
  \end{align}
  where $a$, $b$, and $d$ are the coefficients of the $x^2$, $xy$,
  and $x$ terms in $p(x,y)$ respectively.
  According to (\ref{eq:estimateErrorOfLines}),
  (\ref{eq:expansionOfpEq1}), (\ref{eq:expansionOfpEq2})
  and (\ref{eq:fittingCoef}),
  we deduce that $|\phi_f^y - \phi_p^y| = O(h^3)$.
  A similar analysis yields $|\phi_f^x - \phi_p^x| = O(h^3)$.
  Hence, we have
  \begin{equation}\label{eq:err2Estimation}
    err_2 \le O(1)\cdot \|S_{D_f\oplus D_p}\| \le O(h^4).
  \end{equation}
  Consequently, we conclude $\|S_f\| = \|S_p\| + O(h^4)$
  by (\ref{eq:err1Estimation}) and (\ref{eq:err2Estimation}).
\end{proof}

\begin{figure}[H]
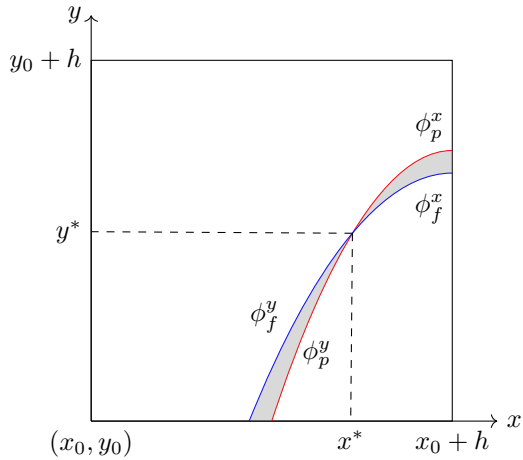

  \centering
  \includestandalone{{\TIKZDIR}FigofProoftoThmAreaError}
  \caption{The intersection lines of surfaces with the plane $z = z_0+h$.}
  \label{fig:figureForThmLeastSquaresSurfaceError}
\end{figure}

\begin{corollary}
  \label{coro:surfaceIntegralError}
  Suppose $g(x,y,z)$ and its first-order partial derivatives are bounded in $\Omega$.
  Then the surface integral error satisfies
  \begin{equation}
    \label{eq:surfaceIntegralError}
    \int_{S_f}g\mathrm{d}S - \int_{S_p}g\mathrm{d}S = O(h^4),
  \end{equation}
  and the surface-averaged error satisfies
  \begin{equation*}
    \frac{1}{\|S_f\|}\int_{S_f}g\mathrm{d}S
    - \frac{1}{\|S_p\|}\int_{S_p}g\mathrm{d}S = O(h^3).
  \end{equation*}
\end{corollary}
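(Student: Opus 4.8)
The plan is to reuse the two-region decomposition from the proof of Theorem~\ref{thm:leastSquaresSurfaceAreaError} for the integral identity~(\ref{eq:surfaceIntegralError}), and then to extract the sharper averaged bound by a centering argument. I parametrize both surfaces over their projections $D_f,D_p$ onto $[x_0,x_0+h]\times[y_0,y_0+h]$, abbreviate $g_f(x,y):=g(x,y,f(x,y))$ and $g_p(x,y):=g(x,y,p(x,y))$, and write $J_f:=\sqrt{1+f_x^2+f_y^2}$, $J_p:=\sqrt{1+p_x^2+p_y^2}$. For~(\ref{eq:surfaceIntegralError}) I split
\[ \int_{S_f}g\,\mathrm{d}S-\int_{S_p}g\,\mathrm{d}S = \int_{D_f\cap D_p}\big(g_fJ_f-g_pJ_p\big)\,\mathrm{d}x\mathrm{d}y + \int_{D_f\oplus D_p}O(1)\,\mathrm{d}x\mathrm{d}y, \]
and on $D_f\cap D_p$ rewrite the integrand as $(g_f-g_p)J_f+g_p(J_f-J_p)$. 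Since $f=p+O(h^3)$ and $g$ has bounded first derivatives, $g_f-g_p=O(h^3)$; since $f_x=p_x+O(h^2)$, $f_y=p_y+O(h^2)$ by Corollary~\ref{coro:leastSquaresFittingError2D}, $J_f-J_p=O(h^2)$. Thus the integrand is $O(h^2)$ and, over the region $\|S_{D_f\cap D_p}\|=O(h^2)$, contributes $O(h^4)$; the remaining term is $O(1)\cdot\|S_{D_f\oplus D_p}\|=O(h^4)$, reusing $\|S_{D_f\oplus D_p}\|=O(h^4)$ from the proof of Theorem~\ref{thm:leastSquaresSurfaceAreaError}. Summing establishes~(\ref{eq:surfaceIntegralError}).

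For the averaged estimate I set $I_f:=\int_{S_f}g\,\mathrm{d}S$, $I_p:=\int_{S_p}g\,\mathrm{d}S$, $\bar g_p:=I_p/\|S_p\|$, and use the identity
\[ \frac{I_f}{\|S_f\|}-\frac{I_p}{\|S_p\|} = \frac{I_f-I_p}{\|S_f\|}+\frac{\bar g_p\big(\|S_p\|-\|S_f\|\big)}{\|S_f\|}. \]
Plugging in $I_f-I_p=O(h^4)$, $\|S_f\|-\|S_p\|=O(h^4)$, and $\|S_f\|=\Theta(h^2)$ makes each term only $O(h^2)$, which falls short of the claim. The decisive step is to recenter $g$ about the constant $\bar g_p$: writing $g=\bar g_p+\tilde g$ with $\tilde g:=g-\bar g_p$ and using $\int_S\bar g_p\,\mathrm{d}S=\bar g_p\|S\|$ gives
\[ I_f-I_p=\bar g_p\big(\|S_f\|-\|S_p\|\big)+\Big(\int_{S_f}\tilde g\,\mathrm{d}S-\int_{S_p}\tilde g\,\mathrm{d}S\Big). \]
Rerunning the decomposition of the first part for $\tilde g$, and noting that $|\tilde g|=O(h)$ throughout the cell while its derivatives stay bounded, the $D_f\cap D_p$ integrand now scales as $O(h^3)$ and the symmetric-difference integrand as $O(h)$, so the centered integral error is $O(h^5)$. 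Substituting back, the two $\bar g_p\big(\|S_f\|-\|S_p\|\big)$ terms cancel exactly, leaving $O(h^5)/\|S_f\|=O(h^3)$.

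The crux, and the main obstacle, is precisely this cancellation: a direct quotient-difference estimate delivers only $O(h^2)$, and recovering $O(h^3)$ relies on the observation that subtracting the constant surface average $\bar g_p$ simultaneously annihilates the leading term and shrinks the centered integrand to $O(h)$, thereby upgrading the integral error by one power of $h$. The only other point requiring care is the tacit nondegeneracy $\|S_f\|,\|S_p\|=\Theta(h^2)$, so that dividing by $\|S_f\|$ costs exactly two orders; this is where vanishingly small cut faces could spoil the estimate, and is presumably guaranteed by the merging algorithm of Section~\ref{sec:merging-algorithm}.
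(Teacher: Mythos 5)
Your proof is correct and takes essentially the same approach as the paper's: the identical two-region decomposition over $D_f\cap D_p$ and $D_f\oplus D_p$ for~(\ref{eq:surfaceIntegralError}), followed by a recentering argument that subtracts a constant from $g$, shrinks the centered integrand to $O(h)$, upgrades the centered integral difference to $O(h^5)$, and then divides by the $\Theta(h^2)$ areas. The only cosmetic difference is the choice of centering constant --- the paper Taylor-expands about a point value $g(x_0,y_0,z_0)$ inside the cell, whereas you subtract the surface average $\bar{g}_p$ --- and both arguments share the tacit nondegeneracy assumption $\|S_f\|,\|S_p\|=\Theta(h^2)$ that you rightly flag.
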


\begin{proof}
  Direct calculation yields
  \begin{align*}
    &\left|\int_{S_f}g\mathrm{d}S - \int_{S_p}g\mathrm{d}S\right|\\
    = & \left|\int_{D_f}g\sqrt{1+f_x^2+f_y^2}\mathrm{d}x\mathrm{d}y
        - \int_{D_p}g\sqrt{1+p_x^2+p_y^2}\mathrm{d}x\mathrm{d}y\right|\\
    \le&\left|\int_{D_f\cap D_p} g
         \frac{f_x^2+f_y^2 - p_x^2 -p_y^2}
         {\sqrt{1+f_x^2+f_y^2}+\sqrt{1+p_x^2+p_y^2}}
         \mathrm{d}x\mathrm{d}y\right|
         + \left|\int_{D_f\oplus D_p} g\cdot O(1)
         \mathrm{d}x\mathrm{d}y\right|\\
    =& O(h^4),
  \end{align*}
  where the last step follows from the proof of Theorem
  \ref{thm:leastSquaresSurfaceAreaError}.

  Let $\mathcal{C}_{\mathbf{i}}$ denote the cut cell to which
  $S_f, S_p$ belong.
  Given a point $(x_0, y_0, z_0) \in \mathcal{C}_{\mathbf{i}}$,
  applying the Taylor expansion of $g(x,y,z)$ at $(x_0, y_0, z_0)$ yields
  \begin{equation*}
    g(x,y,z) = g(x_0, y_0, z_0) + \ell(x,y,z),
  \end{equation*}
  where $\ell(x,y,z)$ represents the higher-order terms.
  According to the properties of the Taylor expansion and
  (\ref{eq:surfaceIntegralError}),
  we have
  \begin{equation}
    \label{eq:taylorOfg}
    \int_{S_f} \ell(x,y,z) \mathrm{d} S - \int_{S_p} \ell(x,y,z)\mathrm{d} S = O(h^5).
  \end{equation}
  Since $\|S_f\|,\|S_p\| = O(h^2)$,
  it follows that
  \begin{align*}
    &\frac{1}{\|S_f\|} \int_{S_f} g\mathrm{d}S - \frac{1}{\|S_p\|} \int_{S_p}g \mathrm{d}S\\
    =&\frac{1}{\|S_f\|} \int_{S_f} \ell \mathrm{d}S - \frac{1}{\|S_p\|} \int_{S_p}\ell\mathrm{d}S\\
    =&\frac{1}{\|S_f\|\|S_p\|}
       \left[\left(\|S_p\|-\|S_f\|\right)\int_{S_f} \ell\mathrm{d}S
       +\|S_f\|\left(\int_{S_f} \ell\mathrm{d}S-\int_{S_p}
       \ell\mathrm{d}S\right) \right]\\
    =&O(h^3),
  \end{align*}
  where the last step follows from (\ref{eq:leastSquaresSurfaceAreaError})
  and (\ref{eq:taylorOfg}).

\end{proof}

\begin{theorem}
  \label{thm:leastSquaresVolumeError}
  The volume error of $V_f$ and $V_p$ is
  \begin{equation*}
    \|V_f\| = \|V_p\| + O(h^5).
  \end{equation*}
\end{theorem}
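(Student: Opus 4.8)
The plan is to express both volumes as integrals of a vertical column-height function over the common projection rectangle $R = [x_0,x_0+h]\times[y_0,y_0+h]$ and then bound their difference pointwise using the fitting estimate already in hand. Without loss of generality I would assume the interior region lies below the graph, so that the part of the cube occupied by $V_f$ over a point $(x,y)\in R$ is a vertical segment whose length is the clamped height $h_f(x,y) := \max\bigl(0,\ \min(f(x,y),z_0+h)-z_0\bigr)\in[0,h]$, and I would define $h_p$ analogously from $p$. First I would write $\|V_f\| = \int_R h_f\,\mathrm{d}x\,\mathrm{d}y$ and $\|V_p\| = \int_R h_p\,\mathrm{d}x\,\mathrm{d}y$, so that $\|V_f\|-\|V_p\| = \int_R (h_f-h_p)\,\mathrm{d}x\,\mathrm{d}y$.

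The key observation is that the map $t\mapsto \max\bigl(0,\min(t,z_0+h)-z_0\bigr)$ is a contraction (that is, $1$-Lipschitz), whence $|h_f(x,y)-h_p(x,y)| \le |f(x,y)-p(x,y)|$ for every $(x,y)\in R$. By Corollary~\ref{coro:leastSquaresFittingError2D} the right-hand side is $O(h^3)$ uniformly on $R$, and since the area of $R$ is $\|R\| = h^2$ I would conclude
\begin{equation*}
  \bigl|\,\|V_f\|-\|V_p\|\,\bigr| \le \int_R |f-p|\,\mathrm{d}x\,\mathrm{d}y \le O(h^3)\cdot h^2 = O(h^5),
\end{equation*}
which is the claim.

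I expect the main obstacle to be the geometric bookkeeping rather than the estimate itself: one must justify the height-function representation near the faces of the cube, where the surface may enter or leave through the top or bottom face rather than through the sides, so that the two column heights are clamped on different subregions of $R$. The contraction property is precisely what absorbs this difficulty, since it bounds $|h_f-h_p|$ by $|f-p|$ even on the regions where exactly one of the two surfaces is clamped. Should one prefer to mirror the decomposition used in the proof of Theorem~\ref{thm:leastSquaresSurfaceAreaError}, the alternative is to split $R$ into $D_f\cap D_p$ and the symmetric difference $D_f\oplus D_p$: on the former the integrand is $O(h^3)$ over an area $O(h^2)$, giving $O(h^5)$; on the latter, the area is $O(h^4)$ (as established there via $|\phi_f-\phi_p|=O(h^3)$ integrated over a perimeter of length $O(h)$) while each column height is at most $h$, again giving $O(h^5)$. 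Either route reduces the theorem to the pointwise $O(h^3)$ fitting bound of Corollary~\ref{coro:leastSquaresFittingError2D}.
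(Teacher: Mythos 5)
Your proposal is correct and follows essentially the same route as the paper, which bounds $\bigl|\,\|V_f\|-\|V_p\|\,\bigr|$ by the measure of the symmetric difference $V_f\oplus V_p$, in turn bounded by $\int|f-p|\,\mathrm{d}x\,\mathrm{d}y = O(h^3)\cdot O(h^2)=O(h^5)$ via Corollary~\ref{coro:leastSquaresFittingError2D}. Your clamped-height-function formulation with the $1$-Lipschitz contraction is simply a more careful justification of that first inequality near the cube faces, a point the paper's one-line proof passes over implicitly.
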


\begin{proof}
  \begin{equation}\label{eq:volumeError}
    \|V_f-V_p\|\le\|V_{f}\oplus V_{p}\|
    \le\int_{D_f \cup D_p}|f(x,y) - p(x,y)|\mathrm{d}x\mathrm{d}y = O(h^5),
  \end{equation}
  where the last step follows from Corollary \ref{coro:leastSquaresFittingError2D}.
\end{proof}

\begin{corollary}
  Suppose $g(x,y,z)$ and its first-order partial derivatives are bounded in $\Omega$.
  Then we have the volume integral error
  \begin{equation*}
    \int_{V_f}g\mathrm{d}V - \int_{V_p}g\mathrm{d}V = O(h^5),
  \end{equation*}
  and the volume-averaged error
  \begin{equation}
    \label{eq:volumeIntegralAverageError}
    \frac{1}{\|V_f\|}\int_{V_f}g\mathrm{d}V - \frac{1}{\|V_p\|}\int_{V_p}g
    \mathrm{d}V = O(h^3).
  \end{equation}
\end{corollary}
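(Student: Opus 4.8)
The plan is to mirror the proof of Corollary~\ref{coro:surfaceIntegralError} almost verbatim, trading surfaces for volumes and raising each order estimate by one power of $h$ (a volume integral over a cell-sized region carries a factor $O(h^3)$ rather than the $O(h^2)$ of a surface integral). I would establish the two assertions in sequence: first the absolute volume integral error $O(h^5)$, and then bootstrap it to the volume-averaged error $O(h^3)$.

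For the first claim I would note that $V_f$ and $V_p$ coincide on $V_f\cap V_p$, so the integrals cancel there and only the symmetric difference contributes:
\begin{equation*}
  \left|\int_{V_f}g\,\mathrm{d}V - \int_{V_p}g\,\mathrm{d}V\right|
  \le \int_{V_f\oplus V_p}|g|\,\mathrm{d}V.
\end{equation*}
Since $g$ is bounded, $|g|\le M$ on $\Omega$, and the proof of Theorem~\ref{thm:leastSquaresVolumeError}, specifically (\ref{eq:volumeError}), already supplies $\|V_f\oplus V_p\| = O(h^5)$. Hence the right-hand side is at most $M\cdot O(h^5) = O(h^5)$, which is the first estimate.

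For the second claim I would fix any point $(x_0,y_0,z_0)$ in the cut cell and split $g = g(x_0,y_0,z_0) + \ell$ with $\ell := g - g(x_0,y_0,z_0)$; boundedness of the first-order partials and the mean value inequality give $|\ell| = O(h)$ throughout the cell. The constant part cancels exactly in the two averages, so the averaged difference equals $\frac{1}{\|V_f\|}\int_{V_f}\ell\,\mathrm{d}V - \frac{1}{\|V_p\|}\int_{V_p}\ell\,\mathrm{d}V$, which, over a common denominator as in Corollary~\ref{coro:surfaceIntegralError}, becomes
\begin{equation*}
  \frac{1}{\|V_f\|\,\|V_p\|}
  \left[\left(\|V_p\|-\|V_f\|\right)\int_{V_f}\ell\,\mathrm{d}V
  + \|V_f\|\left(\int_{V_f}\ell\,\mathrm{d}V - \int_{V_p}\ell\,\mathrm{d}V\right)\right].
\end{equation*}
I would then count powers of $h$ termwise: $\|V_f\|,\|V_p\| = O(h^3)$; $\|V_p\|-\|V_f\| = O(h^5)$ by Theorem~\ref{thm:leastSquaresVolumeError}; $\int_{V_f}\ell\,\mathrm{d}V = O(h)\cdot O(h^3)=O(h^4)$; and, repeating the symmetric-difference argument of the first claim now with $|\ell|=O(h)$, $\int_{V_f}\ell\,\mathrm{d}V - \int_{V_p}\ell\,\mathrm{d}V = O(h)\cdot O(h^5) = O(h^6)$. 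The bracket is thus $O(h^5)\cdot O(h^4) + O(h^3)\cdot O(h^6) = O(h^9)$ against a denominator $O(h^6)$, yielding the quotient $O(h^3)$.

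This argument is essentially routine once the template of Corollary~\ref{coro:surfaceIntegralError} is in hand, and no geometric input beyond $\|V_f\oplus V_p\|=O(h^5)$ is required. The only point needing care is the $h$-power bookkeeping in the final quotient: subtracting the leading constant $g(x_0,y_0,z_0)$ is precisely what upgrades the naive bound — numerator $O(h^8)$ over denominator $O(h^6)$, giving only $O(h^2)$ from boundedness of $g$ alone — to the sharper $O(h^3)$, since it replaces $\int_{V_f}g = O(h^3)$ by $\int_{V_f}\ell = O(h^4)$ and $\int_{V_f}g - \int_{V_p}g = O(h^5)$ by $\int_{V_f}\ell - \int_{V_p}\ell = O(h^6)$.
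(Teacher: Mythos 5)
Your proposal is correct and follows essentially the same route as the paper: the first estimate is obtained exactly as in the paper by bounding the difference by $\int_{V_f\oplus V_p}|g|\,\mathrm{d}V$ and invoking (\ref{eq:volumeError}), and the second is precisely the paper's (terse) instruction to ``apply similar reasoning as in Corollary~\ref{coro:surfaceIntegralError}'' carried out in full, with the constant term cancelling in the averages and the common-denominator decomposition giving $O(h^9)/O(h^6)=O(h^3)$. Your explicit bookkeeping, including the observation that $\int_{V_f}\ell\,\mathrm{d}V-\int_{V_p}\ell\,\mathrm{d}V=O(h^6)$ via $|\ell|=O(h)$ on the symmetric difference, is exactly the content the paper leaves implicit.
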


\begin{proof}
  We have
  \begin{equation*}
    \left|\int_{V_f}g\mathrm{d}V - \int_{V_p}g\mathrm{d}V\right|\le
    \int_{V_f\oplus V_p}|g|\mathrm{d}V = O(h^5),
  \end{equation*}
  where the last step follows from $(\ref{eq:volumeError})$.
  And by applying similar reasoning as in the proof of
  Corollary \ref{coro:surfaceIntegralError},
  we obtain (\ref{eq:volumeIntegralAverageError}).
\end{proof}

Numerical experiments on geometric accuracy
are presented in Section \ref{sec:geometry-accuracy-tests},
which validate our theoretical results.
Furthermore, adaptive techniques can be employed
to locally enhance the mesh resolution near the boundary regions,
ensuring the desired approximation accuracy is achieved.

\subsection{Numerical Cubature}
\label{sec:numerical-cubature}

In finite volume method, it is essential to compute
integrals of a given function $f$
over a control volume $\mathcal{C} \in \mathbb{Y}$
or one of its boundary surfaces $S \subset \partial\mathcal{C}$.

For integrals over control volumes,
they can be transformed into a sum of
integrals over surfaces by the divergence theorem, i.e.,
\begin{equation}
  \label{eq:volumeIntegralToSurfaceIntegral}
  \iiint_{\mathcal{C}} f \mathrm{d} V
  = \oiint_{\partial\mathcal{C}} \mathbf{F} \cdot \mathbf{n} \mathrm{d} S,
\end{equation}
where $\mathbf{n}$ denotes the unit outward normal vector
and $\mathbf{F}$ is defined as
\begin{equation*}
  \mathbf{F}=\left(\int_{\xi_0}^{x}f(\xi,y,z)\mathrm{d}\xi,0,0\right),
\end{equation*}
with $\xi_0$ being an arbitrarily chosen real number.
For a boundary surface $S \subset \partial\mathcal{C}$
with analytic representation $\omega=\omega(u,v)$,
the right side of
(\ref{eq:volumeIntegralToSurfaceIntegral})
can be expressed as a sum of the integrals over $S$:
\begin{equation*}
  \iint_{S} \mathbf{F} \cdot \mathbf{n} \mathrm{d} S
  = \iint_{D_{uv}} (\mathbf{F}\cdot \mathbf{n})
  \sqrt{1 + \omega^2_u + \omega^2_v} \mathrm{d}u \mathrm{d}v,
\end{equation*}
where $D_{uv}$ denotes the projection of $S$ onto the $u,v$ plane.

For integrals over surfaces,
let $\mathbf{x}=(u(t),v(t)), t \in [0,1]$
be a smooth parametrization of $\partial D_{uv}$.
Given a function $g$, the application of the Green's formula yields
\begin{align}
  \iint_S g \mathrm{d} S
  &= \iint_{D_{uv}} g\big( u,v,\omega(u,v)\big)
    \sqrt{1 + \omega^2_u + \omega^2_v}
    \mathrm{d}u \mathrm{d}v \nonumber \\
  &= \iint_{D_{uv}} h(u,v) \mathrm{d}u \mathrm{d}v
    = \oint_{\partial D_{uv}} H(u,v) \mathrm{d} v \nonumber \\
  \label{eq:surfaceIntegralOfgb}
  &= \int_{0}^{1} H\big(u(t),v(t)\big) v'(t) \mathrm{d} t,
\end{align}
where $h(u,v)=g\big(u,v,\omega(u,v)\big)\sqrt{1 + \omega^2_u + \omega^2_v}$
and $H(u,v)$ is the primitive of $h(u,v)$ with respect to $u$, given by
\begin{equation*}
  H(u,v) = \int_{\xi_0}^{u} h(\xi,v)\mathrm{d} u.
\end{equation*}

The integral in (\ref{eq:surfaceIntegralOfgb})
can then be evaluated recursively
using one-dimensional numerical schemes like Gauss-Legendre quadrature.
If $\partial D_{uv}$ is merely piecewise smooth,
(\ref{eq:surfaceIntegralOfgb}) is applied to each smooth segment and
the results are aggregated.


\section{Spatial Discretization}
\label{sec:spatial-discretization}

\subsection{Poised Lattice Generation}
\label{sec:poised-lattice-generation}

Traditional finite difference (FD) methods
encounter limitations
when applied to irregular or complex geometries.
This is principally due to the fact that
FD formulas typically assume regular evenly spaced points,
and approximate the spatial derivatives
by using one-dimensional FD formulas or their tensor-product counterparts.
To address these challenges,
the poised lattice generation (PLG) algorithm was introduced \cite{PLG},
specifically designed to
generate poised lattices within complex geometries.
With the establishment of these interpolation lattices,
high-order discretization of the differential operators
becomes feasible
through the application of multivariate polynomial fitting.

Denote the first $n+1$ natural numbers by
\begin{equation*}
  \mathbb{Z}_n := \{0,1,\cdots,n\},
\end{equation*}
and the first $n$ positive integers by
\begin{equation*}
  \mathbb{Z}_n^+ := \{1,2,\cdots,n\}.
\end{equation*}

\begin{definition}
  [Lagrange interpolation problem, c.f. \cite{carnicer2006interpolation}]
  Denote by $\Pi_n^D$ the vector space of all D-variate polynomials
  of degree no more than $n$ with real coefficients.
  Given a finite number of points
  $\mathbf{x}_1,\mathbf{x}_2,\cdots,\mathbf{x}_N \in \mathbb{R}^D$,
  and the same number of data
  $f_1,f_2,\cdots,f_N \in \mathbb{R}$,
  the \emph{Lagrange interpolation problem} seeks a polynomial
  $f \in \Pi_n^D$ such that
  \begin{equation}
    \label{eq:lagrangeInterpolationProblem}
    f(\mathbf{x}_j) = f_j, \quad \forall j = 1,2,\cdots,N,
  \end{equation}
  where $\Pi_n^D$ is the \emph{interpolation space} and
  $\mathbf{x}_j$'s are the \emph{interpolation sites}.
\end{definition}

The sites $\{\mathbf{x}_j\}_{j = 1}^N$ are said to be \emph{poised}
in $\Pi_n^D$ if there exists a unique $f \in \Pi_n^D$
satisfying (\ref{eq:lagrangeInterpolationProblem})
for any given data $\{f_j\}_{j = 1}^N$.
The principal objective of the PLG algorithm is to find
poised sites near a given site in complex geometries.
In practice,
the poised sites can be arranged into
the form of triangular lattice.

\begin{definition}[Triangular lattice]
  \label{def:triangularLattice}
  A subset $\mathcal{T}^D_n$ of $\mathbb{R}^D$ is called
  \emph{a triangular lattice of degree $n$ in $D$ dimensions}
  if there exist $n+1$ distinct coordinates
  and a numbering of these coordinates,
  \begin{equation*}
    \left[
      \begin{array}{cccc}
        p_{1,0}&p_{1,1}&\cdots&p_{1,n}\\
        p_{2,0}&p_{2,1}&\cdots&p_{2,n} \\
        \vdots &\vdots &\ddots&\vdots \\
        p_{D,0}&p_{D,1}&\cdots&p_{D,n}
      \end{array}
    \right] \in \mathbb{R}^{D \times(n+1)},
  \end{equation*}
  such that $\mathcal{T}_n^D$ can be expressed as
  \begin{equation*}
    \mathcal{T}_n^D = \left\{
      (p_{1,k_1}, p_{2,k_2},\cdots, p_{D,k_D}) \in \mathbb{R}^D
      : k_i \in \mathbb{Z}_n; \sum\limits_{i = 1}^D k_i \leq n
    \right\},
  \end{equation*}
  where $p_{i,j}$ denotes the $j$th coordinate of the $i$th variable $p_i$.
\end{definition}

In \cite{PLG}, it is proved that any triangular lattice $\mathcal{T}_n^D$ is
poised in $\Pi_n^D$.
The PLG problem is to seek a collection of such triangular lattices
from available candidate points.

\begin{definition}[PLG problem]
  \label{def:PLGProblem}
  Denote the $D$-dimensional cube of size $n+1$ as
  \begin{equation*}
    \mathbb{Z}_n^D := (\mathbb{Z}_n)^D = \{0,1,\cdots,n\}^D,
  \end{equation*}
  and define the set of all triangular lattices of degree $n$ in $\mathbb{Z}_n^D$ as
  \begin{equation*}
    \mathcal{X} := \{\mathcal{T}_n^D: \mathcal{T}_n^D \subset \mathbb{Z}_n^D\}.
  \end{equation*}
  For a set of feasible nodes $K \subseteq \mathbb{Z}_n^D$
  and a starting point $\mathbf{q} \in K$,
  the \emph{PLG problem} seeks
  $\mathcal{T} \in \mathcal{X}$ such that
  $\mathbf{q} \in \mathcal{T}$
  and $\mathcal{T} \subseteq K$.
\end{definition}

PLG algorithm solves the PLG problem by back-tracking.
More details can be found in \cite{PLG}.

\begin{figure}[htp]
  \centering
  \includegraphics[width=0.8\textwidth]{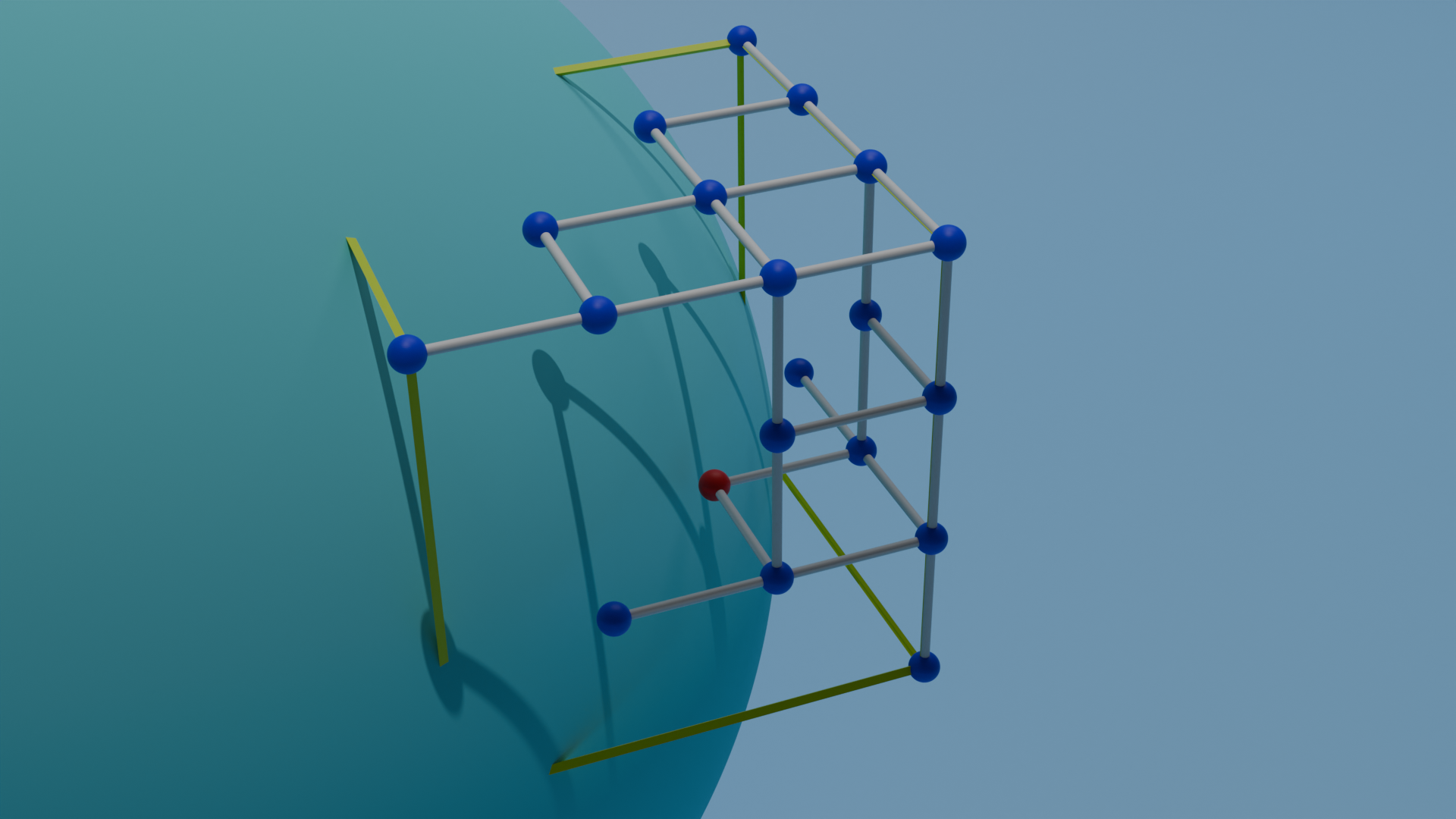}
  \caption{For the finite-difference discretization of a spatial
    operator at red FD node $\mathbf{x}_j$,
    we select a poised lattice
    $\mathcal{T}_{\mathbf{j}} = \{\mathbf{x}_j\}$
    in $\Pi_3^3$.
    The red node and the blue nodes represent
    $\mathcal{T}_{\mathbf{j}}$ and the
    ellipsoid represents the irregular boundary.}
  \label{fig:plg3d}
\end{figure}

\subsection{Merging Algorithms}
\label{sec:merging-algorithm}

\begin{definition}
  \label{def:thetaProper}
  A cut cell $\mathcal{C}_{\mathbf{i}}$
  is called a \emph{$\theta$-proper cell}
  if it is non-empty, connected and satisfies
  \begin{equation*}
    \frac{\|\mathcal{C}_{\mathbf{i}}\|}{h^D} \ge \theta,
  \end{equation*}
  where $D=3$, $h \in \mathbb{R}^+$ is the spacing of the grid,
  and $\theta \in (0,\frac{1}{2})$ is a user-defined tolerance.
\end{definition}

To ensure the robustness of our method,
it is necessary to merge cells that are not $\theta$-proper.

A cut cell $\mathcal{C}_{\mathbf{i}}$
is called \emph{multi-component}
if it contains more than one connected component.
It can be represented as
$\mathcal{C}_{\mathbf{i}}=\bigcup_{k=1}^{n_c} {\cal C}_{\mathbf{i}}^{k}$,
where $n_c>1$ indicates the number of components,
and $\mathcal{C}_{\mathbf{i}}^k$'s are pairwise distinct.
In particular,
if $\mathcal{C}_{\mathbf{i}}$ does not consist of multiple components,
it is understood that
$\mathcal{C}_{\mathbf{i}}=\mathcal{C}_{\mathbf{i}}^1$.
Let $\hat{\mathcal{C}}_{\mathbf{i}}
(\text{or} \hat{\mathcal{C}}_{\mathbf{i}}^k)$
denote the union of those cells that are merged with
$\mathcal{C}_{\mathbf{i}}\ (\text{or}\ \mathcal{C}_{\mathbf{i}}^k)$,
including itself.
If no cells are merged with $\mathcal{C}_{\mathbf{i}}$,
then $\mathcal{C}_{\mathbf{i}} = \hat{\mathcal{C}}_{\mathbf{i}}$.
Moreover, to represent the grid structure,
we construct an undirected graph $G=(V,E)$,
where each vertex $v \in V$ is associated with
a cell component $\mathcal{C}_{\mathbf{i}}^k$,
and an edge $e \in E$ connects any two components,
$\mathcal{C}_{\mathbf{i}}^k$ and $\mathcal{C}_{\mathbf{j}}^{k'}$,
that share a common face.

We design Algorithm \ref{alg:cellMerging} with the
following core merging principles:

\begin{enumerate}[label = (MAP-\arabic{enumi})]
  \item Two cut cells
  $\mathcal{C}_{\mathbf{i}}$ and $\mathcal{C}_{\mathbf{j}}$ are \emph{mergeable}
  if they share a common face
  and satisfy one of the following conditions:
  (a) neither cell is multi-component,
  and at least one of them is $\theta$-proper;
  (b) one cell is multi-component,
  while the other is a non-multi-component $\theta$-proper cell.
  \label{map:mergeable}

  \item For a multi-component cell
  $\mathcal{C}_{\mathbf{i}}
  =\bigcup_{k=1}^{n_c} {\cal C}_{\mathbf{i}}^{k}$ ($n_c\ge2$),
  we merge each component with its adjacent mergeable cell.
  For each $\mathcal{C}_{\mathbf{i}}^{k}$, we
  select an adjacent cell $\mathcal{C}_{\mathbf{j}}$
  such that the area of their common face
  is the largest among all its mergeable cells.
  Then, $\mathcal{C}_{\mathbf{i}}^{k}$ is absorbed into this
  neighboring cell via
  \begin{equation*}
    \hat{{\cal C}}_{\mathbf{j}}\leftarrow\hat{{\cal C}}_{\mathbf{j}}
    \cup^{\bot \bot }\hat{{\cal C}}_{\mathbf{i}}^{k},
  \end{equation*}
  as shown in Figure \ref{fig:multi-component}.
  \label{map:multi-component}

  \item For a non-multi-component cell $\mathcal{C}_{\mathbf{i}}$
  with $\|\mathcal{C}_{\mathbf{i}}\|<\theta h^D$,
  we select an adjacent cell $\mathcal{C}_{\mathbf{j}}$
  such that the area of their common face
  is the largest among all its mergeable cells.
  Subsequently, $\mathcal{C}_{\mathbf{i}}$ is absorbed into this neighbor via
  \begin{equation*}
    \hat{{\cal C}}_{\mathbf{j}}\leftarrow\hat{{\cal C}}_{\mathbf{j}}
    \cup^{\bot \bot }\hat{{\cal C}}_{\mathbf{i}},
  \end{equation*}
  as shown in Figure \ref{fig:mergable}.
  \label{map:smallCells}
\end{enumerate}

\begin{algorithm}
	\caption{\textbf{CellMerging}}
	\begin{algorithmic}[1] \label{alg:cellMerging}
		\REQUIRE The computational domain $\Omega \in \mathbb{Y}$, \\
    the grid width $h<(\|\Omega\|)^{\frac{1}{3}}$, \\
    the user-specified threshold $\theta \in (0, \frac{1}{2})$.
    \ENSURE A set $\{\hat{\mathcal{C}}\}$ of merged cells.
    \PreCondition There is at least one non-multi-component cell in $\Omega$.
		\PostCondition All multi-component cells have been merged. \\
    For any non-multi-component cell ${\cal C}_{\mathbf{i}}$,
    $\hat{{\cal C}}_{\mathbf{i}}$ is $\theta$-proper.\\
    \STATE Initialize $\mathcal{M}_{out}$ as the
    set of cells generated by embedding $\Omega$ into the Cartesian grid:
    $\mathcal{M}_{out} \leftarrow \{ \mathcal{C}_{\mathbf{i}} = C_{\mathbf{i}} \cap \Omega\}$.
    \STATE Preprocess all multi-component cells in $\mathcal{M}_{out}$
      according to \ref{map:multi-component}.
    \STATE Process all cells in $\mathcal{M}_{out}$
      according to \ref{map:smallCells}.
    \FOR{each $\mathcal{C}_{\mathbf{i}}\in\mathcal{M}_{out}$ with
      $\|\hat{\mathcal{C}}_{\mathbf{i}}^k\|< \theta h^D$ or
      each multi-component cell $\mathcal{C}_{\mathbf{i}}\in\mathcal{M}_{out}$
      with component $\mathcal{C}_{\mathbf{i}}^k$ unmerged}
    \STATE Let $S$ denote the set of cell components,
      generated by performing a Breadth-First Search (BFS)
      on graph $G(\mathcal{M}_{out})$
      starting from $\mathcal{C}_{\mathbf{i}}^k$.
    \FOR{each $\mathcal{C}_{\mathbf{j}}^{k'} \in S$}
      \STATE $\hat{{\cal C}}_{\mathbf{i}}^k\leftarrow\hat{{\cal C}}_{\mathbf{i}}^{k}\
        \cup^{\bot \bot }\hat{{\cal C}}_{\mathbf{j}}^{k'}.$
      \IF{$\|\hat{\mathcal{C}}_{\mathbf{i}}^k\|\geq \theta h^D$}
      \BREAK.
      \ENDIF
    \ENDFOR
    \ENDFOR
	\end{algorithmic}
\end{algorithm}

\begin{figure}[htp]
  \centering
  \begin{subfigure}[t]{0.48\textwidth}
    \centering
    \includegraphics[width=1.0\textwidth]{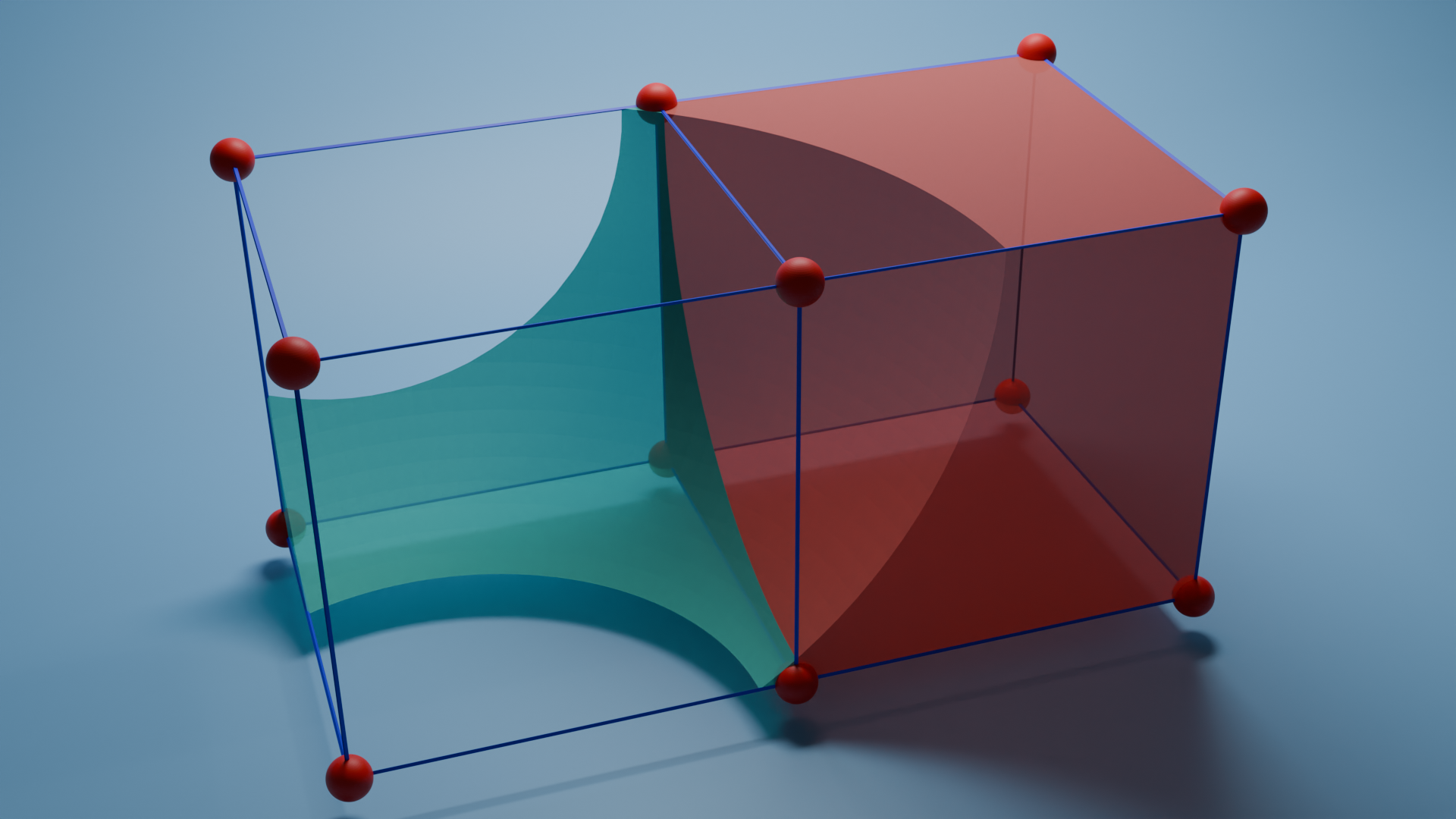}
    \caption{
      Merging of single small cells: the cyan cut cell
      will be merged into the red one.
    }
    \label{fig:mergable}
  \end{subfigure}
  \begin{subfigure}[t]{0.48\textwidth}
    \centering
    \includegraphics[width=1.0\textwidth]{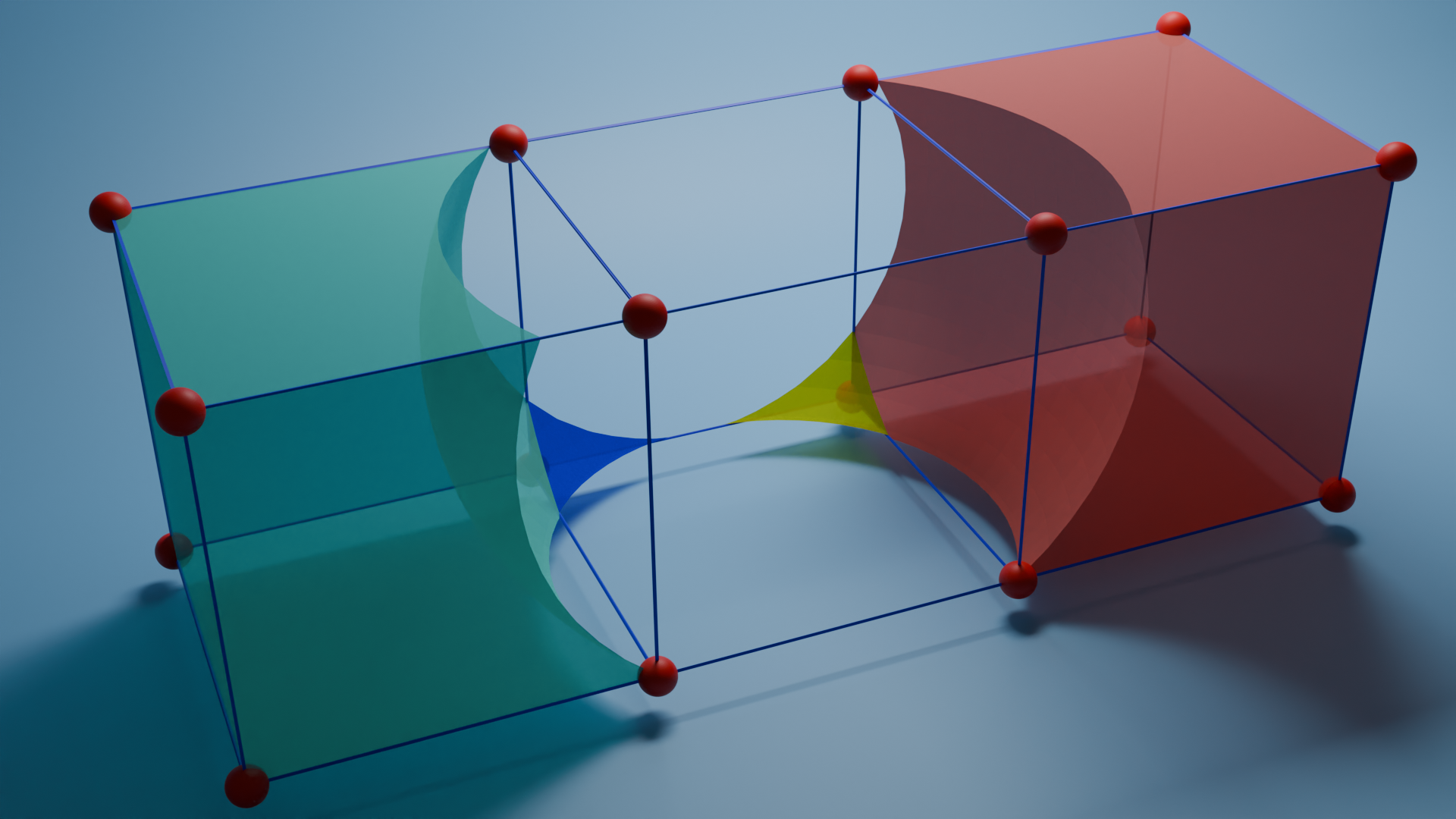}
    \caption{
      Merging of multi-component cells:
      the middle cell consists of two components.
      The blue component will be merged into the cyan cut cell,
      while the yellow component will be merged into the red one.
    }
    \label{fig:multi-component}
  \end{subfigure}
  \caption{Illustration of cell merging.}
  \label{fig:illustrationOfCellMerging}
\end{figure}

Algorithm~\ref{alg:cellMerging} operates in two main steps.
First, it processes all multi-component cells
and small cut cells according to the criteria outlined in
\ref{map:multi-component} and \ref{map:smallCells},
respectively.
This step merges nearly all multi-component cells and small cut cells.
Next, for any remaining non-$\theta$-proper cell
or unmerged multi-component cell, a Breadth-First Search (BFS) is performed
on the graph $G(\mathcal{M}_{out})$ starting from it.
During the traversal,
the cell is incrementally merged with its neighboring cells
until it satisfies the $\theta$-proper condition.
Since the domain $\Omega$ is connected,
its corresponding graph $G(\mathcal{M}_{out})$ is also connected,
guaranteeing the successful and
efficient merging of all multi-component cells
and small cut cells by Algorithm~\ref{alg:cellMerging}.


\section{Multigrid}
\label{sec:multigrid}

In this section, we present a modified multigrid solver for
solving (\ref{eq:multigridLinearSystem}).
In our modified multigrid algorithm,
the smoother operator is coupled with LU factorization
\cite{brandt2011multigrid},
a technique we refer to as "LU-correction",
with $O(\frac{1}{h^2})$ unknowns.
Traditional LU factorization results in a complexity of $O(\frac{1}{h^6})$.
However, owing to the sparsity of the matrix,
avoiding explicit manipulation of zeros
can lead to substantial computational time savings.
We have proved that the complexity of the LU-correction
can be reduced to $O(\frac{1}{h^3})$ by employing
the nested dissection (ND) ordering,
allowing a full multigrid method (FMG) with optimal complexity.

\subsection{Nested Dissection Ordering}

Consider solving a sparse linear system
\begin{equation*}
  Ax = b
\end{equation*}
by LU factorization,
where $A$ is an $n \times n$ sparse symmetric matrix
that can be decomposed as $A=LU$.
Avoiding explicit operations on zeros can
significantly reduce computation time.
However, the process of LU factorization often
introduces new nonzero elements,
known as \emph{fill-ins},
in positions where $A$ originally had zeros.
These fill-ins can
greatly affect the computational efficiency.
To minimize fill-ins,
an effective strategy is
to permute the rows and columns of $A$.
This transformation can be represented as:
\begin{equation*}
  A^{\prime}=PAP^T,
\end{equation*}
where $P$ is a permutation matrix.
By solving the reordered system,
the sparsity of the matrix can be better preserved.

A symmetric matrix $A$ can be represented by an undirected graph
$G = (V,E)$. The graph $G$ contains one vertex $i\in V$
for each row (and column) in $A$, and one edge $\{i,j\}\in E$ for
each pair of nonzero, off-diagonal elements $a_{ij} = a_{ji} \ne 0$ in $A$.
In particular, for partial differential equations involving
one physical unknown per mesh point, the adjacency graph of the matrix
arising from the discretization is often the graph
represented by the mesh itself.
Each permutation matrix $P$ corresponds to
a numbering of the vertices of $G$, i.e., to a one-to-one mapping
$\pi : V \rightarrow \{1,2,\cdots,n\}$.

\begin{lemma}
  \label{lem:LUFactoringComplexity}
  For a sparse symmetric matrix $A\in\mathbb{R}^{n\times n}$, when
  operations on zeros are avoided and pivoting is not employed,
  the total number of operations required for its
  LU factorization is given by
  \begin{equation}
    \label{eq:operationCountOfLU}
    \zeta = \sum_{k=1}^{n-1}2\nu_k(\nu_k + 1),
  \end{equation}
  where $\nu_k$ denotes the number of nonzero elements
  excluding the diagonal in the $k$-th row at the $k$-th step of
  the Gaussian elimination.
\end{lemma}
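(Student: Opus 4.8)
The plan is to count the floating-point operations incurred by sparse Gaussian elimination one step at a time and then sum. First I would fix the standard submatrix (right-looking) form of LU factorization without pivoting: at the $k$-th step the pivot $a_{kk}$ is used to eliminate every nonzero below it in column $k$; for each row $i$ with $a_{ik}\neq 0$ one forms the multiplier $\ell_{ik}=a_{ik}/a_{kk}$ and then updates that row via $a_{ij}\leftarrow a_{ij}-\ell_{ik}a_{kj}$ across the nonzero pattern of the pivot row. Since operations on zeros are skipped by hypothesis, the cost of step $k$ is controlled entirely by two quantities: the number of rows to be eliminated, i.e. the nonzeros below the diagonal in column $k$, and the number of entries touched per such row, i.e. the nonzeros of the pivot row.

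The key structural observation I would establish next is that these two counts coincide and both equal $\nu_k$. Because $A$ is symmetric and no pivoting is performed, symmetric elimination preserves the structural symmetry of the active trailing submatrix: any fill-in created at a position $(i,j)$ with $i,j>k$ is always accompanied by the mirror fill-in at $(j,i)$, since the rank-one update $-\ell_{ik}a_{kj}$ is itself structurally symmetric. Thus, by induction on $k$, at the beginning of step $k$ the off-diagonal nonzero pattern of column $k$ below the diagonal is exactly the transpose of that of row $k$ right of the diagonal. Hence the number of rows to eliminate equals the number $\nu_k$ of off-diagonal nonzeros in the $k$-th row, as defined in the statement. This invariant is where the hypotheses \emph{symmetric} and \emph{without pivoting} are genuinely used.

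With the counts in hand, I would tally the arithmetic at step $k$. The pivot row carries $\nu_k+1$ nonzeros, namely its diagonal entry together with $\nu_k$ off-diagonal entries, and eliminating one row costs one multiply--subtract pair per pivot-row nonzero, i.e. $2(\nu_k+1)$ operations, where the pair associated with the diagonal column accounts for forming the multiplier and annihilating $a_{ik}$. Multiplying by the $\nu_k$ rows to be eliminated gives $2\nu_k(\nu_k+1)$ operations for step $k$. Summing over $k=1,\dots,n-1$, the final pivot requiring no elimination, yields
\[
  \zeta=\sum_{k=1}^{n-1}2\nu_k(\nu_k+1),
\]
as claimed.

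I anticipate the main obstacle to be bookkeeping rather than anything deep. One must pin the per-row cost to exactly $2(\nu_k+1)$ by adopting a uniform ``two operations per pivot-row nonzero'' convention and then verifying that the multiplier computation and the induced zeroing in column $k$ are correctly absorbed into this count, rather than double-counted or omitted. Simultaneously, the structural-symmetry invariant must be phrased as a statement about the evolving nonzero \emph{pattern} rather than the numerical values, so that $\nu_k$ legitimately counts both the rows eliminated and the columns updated; establishing that invariant cleanly, I expect, will be the most delicate part of the argument.
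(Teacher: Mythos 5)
The paper never proves this lemma: it is stated bare, as a known bookkeeping fact from the sparse direct-solver literature (the nested-dissection references cited alongside it), so there is no internal proof to compare yours against. Judged on its own, your argument is correct and is the standard one. The two load-bearing points are exactly the ones you isolate: (i) structural symmetry of the active trailing submatrix is preserved under symmetric elimination without pivoting, so the number of rows eliminated at step $k$ equals the number $\nu_k$ of off-diagonal nonzeros of the pivot row (this is where both hypotheses enter); and (ii) each eliminated row is charged $2(\nu_k+1)$ operations, giving $2\nu_k(\nu_k+1)$ per step and the claimed sum over $k=1,\dots,n-1$, the last pivot requiring no work.

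One caveat you should make fully explicit: the formula (\ref{eq:operationCountOfLU}) is convention-dependent, and your proof is exact only under the convention you adopt. A strict count of arithmetic in the updates $a_{ij}\leftarrow a_{ij}-\ell_{ik}a_{kj}$ ($j>k$) plus one division per multiplier gives $\nu_k(2\nu_k+1)$ per step, which differs from (\ref{eq:operationCountOfLU}) by $\nu_k$. Your device of assigning a ``multiply--subtract pair'' to the diagonal position --- one operation for the division forming $\ell_{ik}$, one for annihilating $a_{ik}$ --- is precisely what reconciles the tally with the stated formula; but since the annihilation is a store rather than a floating-point operation, this must be presented as the \emph{definition} of ``operation'' in force (as you essentially do) rather than as something derived. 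With that convention pinned down the proof is complete, and the ambiguity is harmless in context: the discrepancy is $O\bigl(\sum_k \nu_k\bigr)$, dominated by the $\sum_k \nu_k^2$ term that drives all the complexity conclusions the paper draws from the lemma.
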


The ND ordering
\cite{george1973nested,karypis1998fast,
  lipton1979generalized,saad2003iterative}
is primarily used to reduce fill-ins by providing an effective
mapping $\pi$ of a given graph $G$.
This technique is described
by recursively finding separators in the graph,
as shown in Algorithm \ref{alg:ND}.
A set $S$ of vertices in a graph is called a \emph{separator}
if its removal splits the graph into two
disjoint subgraphs.
The main step of the ND procedure involves
partitioning the graph into three parts:
two disjoint subgraphs and a separator that disconnects them.
In Algorithm \ref{alg:ND},
the numbering is performed in reverse order,
starting from the highest to the lowest.
This ensures that at each level,
the rows (and columns) corresponding to the separator vertices
are eliminated last.
An example illustrating this process is shown in
Figure \ref{fig:anIllustrativeExampleOfNDIn2DGrid}.
Actually, the ND ordering method aims to control the size of $\nu_k$
in (\ref{eq:operationCountOfLU}) through the independence
between subgraphs at each step.
Figure \ref{fig:Dissect} demonstrates the
application of ND ordering in our problem,
significantly reducing the number
of fill-ins during Gaussian elimination.

\begin{algorithm}
	\caption{ND($G$, $a_{\text{min}}$)}
  \label{alg:ND}
	\begin{algorithmic}[1]
    \REQUIRE Graph $G = (V, E)$;
    minimum number of vertices to split $a_{\text{min}}$;
    \SideEffect Vertices in $V$ have a new numbering.
    \IF{$|V| \leq a_{\text{min}}$}
    \STATE Number the vertices in $V$.
    \ELSE
    \STATE Find a separator $S$ for $V$.
    \STATE Number the vertices in $S$.
    \STATE Split $V$ into $G_L, G_R$ by removing $S$.
    \STATE ND($G_L$, $a_{\text{min}}$).
    \STATE ND($G_R$, $a_{\text{min}}$).
    \ENDIF
	\end{algorithmic}
\end{algorithm}

\begin{figure}[htp]
  \centering
  \begin{subfigure}[t]{0.48\textwidth}
    \centering
    \includestandalone[width=.78\textwidth]{{\TIKZDIR}PartitionOfGrid}
    \caption{
      Partition of a two-dimensional regular domain.
    }
    \label{fig:NDGridPartition}
  \end{subfigure}
  \begin{subfigure}[t]{0.48\textwidth}
    \centering
    \includegraphics[width=1.0\textwidth]{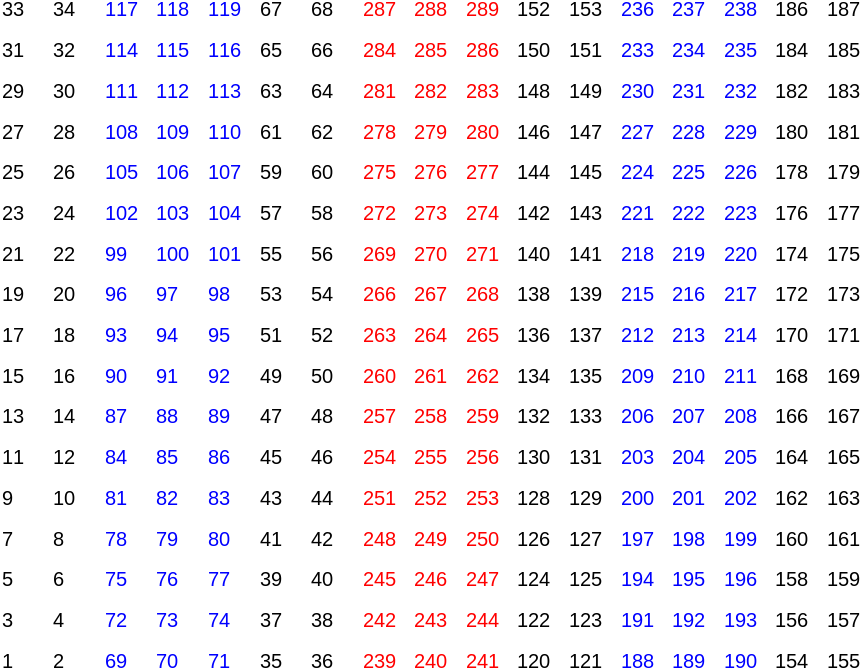}
    \caption{Nested Dissection ordering of (a).}
    \label{fig:orderingOf2DGrid}
  \end{subfigure}
  \caption{
      (a) illustrates the partition of a two-dimensional regular domain
      grid using the finite volume method,
      where the stencil of cell $\mathbf{i}$ includes its
      three adjacent cell layers $\{\mathbf{i} \pm \mathbf{e}^d, \mathbf{i}
      \pm 2\mathbf{e}^{d}, \mathbf{i} \pm 3\mathbf{e}^{d}, d = 1,2\}$.
      In the initial recursion step $C = P_3$, 
      with $A$ occupying the left part and $B$ the right, respectively.
      The second recursion assigns
      $C = P_2$, $A = P_1$.
      (b) shows the corresponding ordering.
  }
  \label{fig:anIllustrativeExampleOfNDIn2DGrid}
\end{figure}

\subsection{A Specific ND Ordering Algorithm}
\label{sec:nested-dissection-ordering}

\begin{definition}
  \label{def:f(n)SeparatorCondition}
  Let ${\cal S}$ be a class of graphs closed under the subgraph relation
  (i.e., if $G_2 \in {\cal S}$ and $G_1$ is a subgraph of $G_2$
  then $G_1 \in {\cal S}$).
  The class ${\cal S}$ satisfies an \emph{$f(n)$-separator condition}
  if there exist constants
  $\alpha \in \left[\frac{1}{2}, 1\right], \beta \in \mathbb{R}^+$,
  for any $n$-vertex subgraph $G$ of ${\cal S}$,
  the vertices of $G$ can be partitioned into three sets $A,B,C$,
  such that no vertex in $A$ is adjacent to any vertex in $B$,
  $|A|, |B| \leq \alpha n$ and $|C| \leq \beta f(n)$,
  where $f(n)$ is a given function of $n$.
\end{definition}

For an $n$-vertex graph $G$ belonging to a family of graphs ${\cal S}$
that satisfies the
$\sqrt{n}$-separator condition,
a specific ND ordering algorithm is
detailed in Algorithm \ref{alg:NDOrder}.
The impact of this ordering on the LU factorization
is described by the two theorems presented below.
By employing Algorithm \ref{alg:NDOrder},
the LU factorization of the matrix corresponding to $G$
exhibits a complexity of $O(n^{\frac{3}{2}})$.

\begin{algorithm}
	\caption{NDOrder($G$, $a$, $b$)}
  \label{alg:NDOrder}
	\begin{algorithmic}[1]
    \REQUIRE Graph $G = (V, E)$;
    start number $a$; end number $b$;
    constants $\alpha, \beta$;
    \SideEffect Vertices in $V$ have a new numbering from $a$ to $b$.
    \IF{$|V| \leq \frac{\beta}{(1-\alpha)^2}$}
    \STATE Number the unnumbered vertices arbitrarily from $a$ to $b$.
    \ELSE
    \STATE $n\leftarrow |V|$.
    \STATE Find sets $A,B,C \subset V$ satisfying
    the $\sqrt{n}$-separator condition.
    \STATE Number the unnumbered vertices in $C$ arbitrarily
    from $b-|C|+1$ to $b$.
    \STATE NDOrder($B \cup C$, $b-|B|-|C|+1$, $b-|C|$).
    \STATE NDOrder($A \cup C$, $a$, $a+|A|-1$).
    \ENDIF
	\end{algorithmic}
\end{algorithm}

\begin{theorem}[Lipton et al. \cite{lipton1979generalized}]
  Let $G$ be any $n$-vertex graph numbered
  by Algorithm \ref{alg:NDOrder},
  the total size of the fill-in in LU factorization
  associated with the numbering is at most $c_1n\log_2n + O(n)$,
  where
  \begin{equation*}
    c_1= - \frac{\beta^2(1 + 3 \sqrt{\alpha})}
                {2(1 - \sqrt{\alpha})\log_2 \alpha}.
  \end{equation*}
\end{theorem}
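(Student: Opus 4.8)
The plan is to charge every fill entry to the separator that \emph{owns} its lower-numbered endpoint, and then to sum these charges over the recursion tree generated by Algorithm~\ref{alg:NDOrder}. First I would record the structural fact that makes nested dissection work: a fill edge $\{u,w\}$ with $\pi(u)<\pi(w)$ is created exactly when $G$ has a path from $u$ to $w$ through vertices all numbered below $\min(\pi(u),\pi(w))$. Because each separator $C$ is numbered above everything in the two parts $A,B$ it disconnects (Definition~\ref{def:f(n)SeparatorCondition}), no such path can cross from the $A$-side to the $B$-side; hence the higher-numbered filled neighbours of a vertex $v$ lie only in its own home separator $C_{t(v)}$ (the topmost separator of the recursion containing $v$) and in the boundary $\partial H_{t(v)}$ of the subproblem $H_{t(v)}$ that first isolates $v$. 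Writing $c_t=|C_t|$, $n_t=|V(H_t)|$ and $b_t=|\partial H_t|$, this yields a per-separator fill of at most $\binom{c_t}{2}+c_t b_t$, so the total fill is bounded by $\sum_t\big(\tfrac12 c_t^2+c_t b_t\big)$.

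Next I would control the boundary sizes. The separator condition gives $c_t\le\beta\sqrt{n_t}$ directly, but the boundary $b_t$ accumulates contributions from all ancestors, and bounding it is the crux of the argument. The claim to establish is $b_t\le \tfrac{2\beta\sqrt\alpha}{1-\sqrt\alpha}\sqrt{n_t}$: each time the recursion descends one level the subproblem size drops by a factor at most $\alpha$, so the portion of each successive ancestor separator adjacent to $H_t$ contributes a term of order $\beta\sqrt{\alpha^{k}n_t}$, and summing this geometric series in $\sqrt\alpha$ produces a bound of the form $B\sqrt{n_t}$. Making this precise — in particular showing that only a $\sqrt{n_t}$-sized piece of each (much larger) ancestor separator can touch $H_t$, so that the series really has ratio $\sqrt\alpha$ rather than diverging, and pinning down the exact constant $B=\tfrac{2\beta\sqrt\alpha}{1-\sqrt\alpha}$ — is the main obstacle, and is where $\alpha$ and $\beta$ genuinely enter.

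With these two bounds in hand the remainder is a level-counting computation. Substituting $c_t\le\beta\sqrt{n_t}$ and $b_t\le B\sqrt{n_t}$ collapses the per-separator charge to $\big(\tfrac{\beta^2}{2}+\beta B\big)n_t$, so the total fill is at most $\big(\tfrac{\beta^2}{2}+\beta B\big)\sum_t n_t$. I would then group the recursion nodes by level: the subproblems at a fixed level are essentially disjoint (overlapping only in duplicated separators, a lower-order correction absorbed into the $O(n)$ term), so their sizes sum to at most $n$, while each descent shrinks sizes by $\alpha$, giving recursion depth $D=-\log_2 n/\log_2\alpha+O(1)$. Hence $\sum_t n_t\le nD+O(n)$, and combining yields total fill at most $\big(\tfrac{\beta^2}{2}+\beta B\big)\cdot\tfrac{n\log_2 n}{-\log_2\alpha}+O(n)$. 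Finally I would simplify $\tfrac{\beta^2}{2}+\beta B=\beta^2\tfrac{1+3\sqrt\alpha}{2(1-\sqrt\alpha)}$, which reproduces exactly $c_1=-\tfrac{\beta^2(1+3\sqrt\alpha)}{2(1-\sqrt\alpha)\log_2\alpha}$, completing the bound.
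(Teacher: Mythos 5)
The paper itself offers no proof of this theorem---it is quoted directly from Lipton, Rose and Tarjan \cite{lipton1979generalized}---so your proposal has to be judged against their original argument. Your high-level architecture is reasonable: the fill-path characterization (a fill edge joins $u,w$ iff some path connects them through vertices numbered below both), the charging of fill to the node of the recursion tree where the lower endpoint is numbered, and the closing arithmetic $\frac{\beta^2}{2}+\beta B=\beta^2\frac{1+3\sqrt\alpha}{2(1-\sqrt\alpha)}$ combined with the depth factor $-1/\log_2\alpha$ do reproduce $c_1$. The fatal gap is precisely the lemma you flag as the crux: the pointwise boundary bound $b_t\le\frac{2\beta\sqrt\alpha}{1-\sqrt\alpha}\sqrt{n_t}$ cannot be established, because it is false under the stated hypotheses. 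Definition~\ref{def:f(n)SeparatorCondition} constrains only the sizes $|A|,|B|\le\alpha n$ and $|C|\le\beta\sqrt n$; it says nothing about how the already-numbered (boundary) vertices inside a subproblem are distributed by the next separator. Nothing prevents every separator below the root from leaving the root separator entirely on one side: this is consistent with the balance condition as long as the subproblem is still large compared with its boundary, so a subproblem $H_t$ with $n_t=\Theta(\sqrt n)$ can inherit a boundary of size $\Theta(\sqrt n)=\Theta(n_t)\gg\sqrt{n_t}$. Moreover, read from $H_t$ upward, the ancestor separators obey only $|C_s|\le\beta\sqrt{n_s}$ with $n_s\gtrsim n_t\alpha^{-k}$, i.e.\ bounds that \emph{grow} like $\beta\sqrt{n_t}\,\alpha^{-k/2}$; there is no structural fact forcing only a $\sqrt{n_t}$-sized piece of each ancestor separator to touch $H_t$, so your geometric series with ratio $\sqrt\alpha$ has no justification.

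This is exactly why the proof in \cite{lipton1979generalized} is not a pointwise estimate but a joint induction: the recursion in Algorithm~\ref{alg:NDOrder} deliberately keeps the separator inside both children ($A\cup C$ and $B\cup C$) so that subsequent separators can cut the inherited boundary, and the fill bound is proved for a function of \emph{both} the subproblem size and its set of already-numbered vertices, with the $\sqrt\alpha$-decay arising in the amortized recurrence (through the shrinkage of $\sqrt{n_t}$ level by level), not from a per-node boundary bound. A second, related soft spot in your sketch is the claim that same-level subproblems are ``essentially disjoint'' with separator duplication absorbed into $O(n)$: separator vertices are copied into both children at every subsequent level, so the duplicated mass is not self-evidently lower order and its control is part of the LRT bookkeeping, not a remark. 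To repair your argument you would need either an extra hypothesis that separators split inherited boundaries proportionally (true for regular grids, which is presumably the intuition you are drawing on, but not for general graphs in $\mathcal{S}$), or to switch to the two-parameter induction of the original paper.
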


\begin{theorem}[Lipton et al. \cite{lipton1979generalized}]
  \label{thm:multiplicationCountOfNDSortedGraph}
  Let $G$ be any $n$-vertex graph numbered
  by Algorithm \ref{alg:NDOrder},
  the total multiplication count in LU factorization
  associated with the numbering is
  at most $c_2n^{\frac{3}{2}} + O(n(\log_2 n)^2)$, where
  \begin{equation*}
    c_2= \frac{\beta^2}{1 - \delta}
    \left( \frac{1}{6}+\frac{\beta \sqrt{\alpha}}{1-\sqrt{\alpha}}
      \left(2+ \frac{\sqrt{\alpha}}{1+\sqrt{\alpha}
          + \frac{4 \alpha}{1-\alpha}}\right)\right),
  \end{equation*}
  with $\delta = \alpha^{\frac{3}{2}} + (1-\alpha)^{\frac{3}{2}}$.
\end{theorem}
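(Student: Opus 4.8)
The plan is to translate the factorization cost into a sum of squared elimination degrees and then bound that sum level-by-level through the separator tree produced by Algorithm \ref{alg:NDOrder}. By Lemma \ref{lem:LUFactoringComplexity} the full operation count is $\sum_{k=1}^{n-1}2\nu_k(\nu_k+1)$; the multiplications alone amount to $\frac{1}{2}\nu_k(\nu_k+1)$ per elimination step to leading order, i.e. $\frac{1}{2}\sum_k\nu_k^2+O(\sum_k\nu_k)$, where $\nu_k$ is the number of higher-numbered neighbours of the $k$-th vertex in the \emph{filled} graph $G^*$. Thus the whole problem reduces to estimating $\sum_k\nu_k^2$, and the heart of the argument is a sharp bound on each individual $\nu_k$.

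First I would establish the standard fill-path characterization: in $G^*$ two vertices $v,w$ are adjacent if and only if $G$ contains a path from $v$ to $w$ through vertices all numbered below $\min(\pi(v),\pi(w))$. Because Algorithm \ref{alg:NDOrder} numbers each separator $C$ above the two pieces $A,B$ it disconnects, any such low-numbered path joining $v$ to a higher-numbered neighbour must stay inside the region currently containing $v$ and can leave it only through the separators that enclose that region. Consequently the higher-numbered neighbours of $v$ lie entirely among the separators on the root-to-$v$ path in the recursion tree. Using the $\sqrt{n}$-separator condition of Definition \ref{def:f(n)SeparatorCondition}, the separator of a subgraph of size $m$ has at most $\beta\sqrt{m}$ vertices, and since the region sizes satisfy $m_{i+1}\le\alpha m_i+\beta\sqrt{m_i}$ they decay geometrically, with $\sqrt{m_i}$ of order $\alpha^{i/2}\sqrt{n}$. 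Summing the enclosing separator sizes along the path then gives a bound of order $\frac{\beta\sqrt{n}}{1-\sqrt\alpha}$ on $\nu_k$; this geometric series is exactly the source of the $\frac{\sqrt\alpha}{1-\sqrt\alpha}$ factors appearing in $c_2$.

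Next I would assemble these per-vertex bounds into a recurrence for the total multiplication count $M(n)$. Eliminating the top-level separator $C$ (numbered last) contributes a self-cost of order $\frac{1}{2}\sum_{j}(\,|C|+|\partial|-j\,)^2$, whose integral form produces the $\frac{1}{6}$ coefficient, while the interiors of $A$ and $B$ are handled by the recursive calls on $A\cup C$ and $B\cup C$. Writing $M(n)\le M(|A\cup C|)+M(|B\cup C|)+(\text{separator cost})$ and inserting the ansatz $M(n)\approx c_2 n^{3/2}$, the leading balance becomes $c_2=\delta\,c_2+(\text{separator-cost coefficient})$, since $|A|^{3/2}+|B|^{3/2}$ is maximized under $|A|+|B|\le n$ and $|A|,|B|\le\alpha n$ at $|A|=\alpha n,\ |B|=(1-\alpha)n$, giving $\delta=\alpha^{3/2}+(1-\alpha)^{3/2}$. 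Solving for $c_2$ yields the prefactor $\frac{1}{1-\delta}$, and tracking the boundary size $|\partial|$ through the recursion produces the remaining $\alpha,\beta$-dependent bracket. The contributions from the $O(\log_2 n)$ shallow recursion levels and from the arbitrarily numbered base cases (where $|V|\le\frac{\beta}{(1-\alpha)^2}$) then accumulate to the stated $O(n(\log_2 n)^2)$ remainder.

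The main obstacle is the combinatorial bookkeeping in the second and third steps: one must bound not only each region's separator but also its \emph{boundary}, the accumulated ancestor separators through which fill-in propagates, and carry this two-parameter estimate consistently through the recursion so that the geometric sums close to the exact constant $c_2$ rather than merely to $O(n^{3/2})$. Establishing the fill-path lemma and verifying that the separators genuinely insulate the two pieces under the specific numbering of Algorithm \ref{alg:NDOrder} is what makes the sharp constant attainable; it is also where the $O(n(\log_2 n)^2)$ error term must be controlled with care.
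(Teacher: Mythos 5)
The first thing to note is that the paper does not prove this theorem at all: it is quoted, constant included, from Lipton, Rose and Tarjan \cite{lipton1979generalized}, and is used in Section \ref{sec:complexity-analysis} purely as a black box. Your proposal therefore has to be judged against the original argument, and structurally it tracks that argument well: the fill-path lemma, the confinement of a vertex's higher-numbered fill neighbours to its own separator and to the separators on its root-to-leaf path in the recursion tree (which is exactly what numbering $C$ last in Algorithm \ref{alg:NDOrder} buys), the $s^3/6$ cost of eliminating a size-$s$ separator clique, and a recurrence whose leading balance yields $\delta=\alpha^{3/2}+(1-\alpha)^{3/2}$ and the factor $1/(1-\delta)$. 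One counting remark: your $\frac{1}{2}\nu_k(\nu_k+1)$ multiplications per step does not follow from Lemma \ref{lem:LUFactoringComplexity}, whose $2\nu_k(\nu_k+1)$ counts all operations of nonsymmetric LU, so that multiplications alone would be $\nu_k(\nu_k+1)$; your figure is the symmetric-elimination count, which happens to be the one consistent with the $1/6$ appearing in $c_2$, but the mismatch with the Lemma as stated should be flagged rather than glossed.

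The genuine gap is precisely the step you set aside as ``combinatorial bookkeeping.'' The uniform bound $\nu_k\le\beta\sqrt{n}/(1-\sqrt{\alpha})$ by itself gives only $\sum_k\nu_k^2=O(n^2)$, and your recurrence $M(n)\le M(|A\cup C|)+M(|B\cup C|)+(\text{separator cost})$ is not self-similar: the subproblem $A\cup C$ carries the already-numbered boundary $C$, whose presence inflates the elimination cost of the vertices of $A$, so the one-parameter ansatz $M(n)\approx c_2 n^{3/2}$ cannot simply be inserted. What makes the theorem true is that Algorithm \ref{alg:NDOrder} recurses on $A\cup C$ rather than on $A$ alone, so the next separator is found in a subgraph containing the inherited boundary and therefore splits that boundary as well; this is what prevents ancestor separators from accumulating along the recursion, keeps the elimination degree of vertices deep in the tree proportional to the square root of their own region's size rather than of $n$, and makes the cost summed over depth $d$ decay like $\delta^d n^{3/2}$. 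Lipton et al.\ formalize this with an induction on two parameters (region size and boundary size), and that induction is where both the exponent $\frac{3}{2}$ and the exact value of $c_2$ are actually obtained; until it is carried out, your outline establishes neither.
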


In addition, for a given graph $G$, multiple methods can be employed to
find such a separator $C$ in Algorithm \ref{alg:NDOrder},
including spectral partitioning methods
\cite{pothen1990partitioning,pothen1992towards},
the multilevel spectral bisection algorithm \cite{barnard1994fast},
geometric partitioning algorithms
\cite{heath1995cartesian,miller1993automatic,teng1991unified}
and multilevel graph partitioning schemes
\cite{bui1993heuristic,hendrickson1995multi,karypis1998fast}.
Research conducted in \cite{hendrickson1995multi}
demonstrates that multilevel graph partitioning schemes can yield
superior partitioning efficiency
and quality compared to alternative methods
for various finite element problems similar to the ones we are studying.
Consequently, we adopt the multilevel schemes,
which involves three phases:
reducing the size of the graph (i.e., coarsening the graph)
by collapsing vertices and edges,
partitioning the smaller graph,
and then uncoarsening it to
construct a partition for the original graph.
For each phase, there are also multiple
approaches available; see \cite{karypis1998fast}.

As for the complexity of Algorithm \ref{alg:NDOrder}
with utilizing the multilevel schemes,
for an $n$-vertex graph $G$,
we assume that the number of vertices
in the graph can be reduced at a fixed rate
during each step of the coarsening phase.
Consequently, a 2-way partitioning of
the original graph $G$ (finding the first graph separator)
requires $O(n)$ time.
For the two resulting subgraphs of $G$,
the total time for their 2-way partitioning also requires $O(n)$.
Moreover, $O(\log(n))$ recursive steps are necessary to
complete the ND ordering of $G$.
Therefore, the overall time complexity of
Algorithm \ref{alg:NDOrder} is $O(n\log(n))$.

\subsection{Multigrid Components}
\label{sec:multigrid-components}

Assume that $\Omega^{\ast} = \{\Omega^{(m)}: 0 \leq m \leq M\}$ is
a hierarchy of grids,
where $M \in \mathbb{Z}^+$ denotes the number of grids,
and $\Omega^{(m+1)} = \mathbf{Coarsen}(\Omega^{(m)})$.
The relationship between the grid spacing of
the $m$th grid and the $0$th grid
often follows $h^{(m)} = 2^mh^{(0)}$.
Practically, the total number of cells contained in
the coarsest grid $\Omega^{(M)}$ is controlled
by a fixed small upper bound,
allowing a direct linear system solver (such as LU factorization)
to be applied with minimal time consumption.
Our modified multigrid algorithm, as shown in
Algorithm \ref{alg:multigrid} and Algorithm \ref{alg:V-cycle},
employs the LU-correction to account for
the particularities of irregular domains.
The update procedure can be divided into two stages:

\begin{enumerate}[label=(SMO-\arabic*)]
\item Smoother: execute an $\omega$-weighted Jacobi iteration
  \begin{equation*}
    \hat{\varphi}_1^{\prime} = D^{-1}\left[(1-\omega)D + \omega O\right]\hat{\varphi}_1
    + \omega D^{-1}(\hat{r}_1 - L_{12}\hat{\varphi}_2),
  \end{equation*}
  where $D$ is the diagonal of $L_{11}$,
  and $O = D - L_{11}$.
  \label{smo1}
\item LU-correction:
  \begin{itemize}
  \item Derive the permutation matrix $P$
  through the application of the nested dissection ordering method
  (detailed in Section \ref{sec:nested-dissection-ordering}) to the
  symmetric matrix
  $L_{22} + L_{22}^T$,
  and denote the reordered matrix as $L_{22}^{\prime} = PL_{22}P^T$.
  \item Employ LU factorization to solve the linear system
  \begin{equation*}
    L_{22}^{\prime}\psi
    = P(\hat{r}_2 - L_{21} \hat{\varphi}_1^{\prime}),
  \end{equation*}
  and update $\hat{\varphi}_2$ by
  $\hat{\varphi}_2^{\prime} = P^T\psi$.
  \label{smo2}
  \end{itemize}
\end{enumerate}

\begin{algorithm}
	\caption{Multigrid}
  \label{alg:multigrid}
	\begin{algorithmic}[1]
    \REQUIRE Hierarchy of grids $\Omega^{\ast}$;
    the discretization operators of each grid $L^{(m)}$;
    the maximum number of iterations $I_{\max}$;
    the residual $\hat{r}$;
    the initial guess $\hat{\varphi}_g$;
    exit condition $\epsilon$.
    \ENSURE Solution for the linear system $L^{(0)}\hat{\varphi} = \hat{r}$.
    \STATE $\hat{\varphi} \leftarrow \hat{\varphi}_g$.
    \FOR{$i=1$ to $I_{\max}$}
    \STATE $\hat{s}^{(0)} \leftarrow \hat{r} - L^{(0)}\hat{\varphi}$.
    \IF {$\frac{\|\hat{s}^{(0)}\|}{\|\hat{r}\|} < \epsilon$}
    \STATE Exit the loop.
    \ENDIF
    \STATE $\hat{\varphi} \leftarrow \hat{\varphi} +
    \mathbf{VCycle}(\hat{s}^{(0)})$.
    \ENDFOR
    \RETURN $\hat{\varphi}$.
	\end{algorithmic}
\end{algorithm}

\begin{algorithm}
	\caption{VCycle}
  \label{alg:V-cycle}
	\begin{algorithmic}[1]
    \REQUIRE
    An integer $M \in \mathbb{Z}^+$ indicates the number of grid levels;
    an integer $m \in \{0,1,\cdots,M\}$ indicates the hierarchy depth;
    the discretization operator of the $m$th grid $L^{(m)}$;
    the residual of the $m$th grid $\hat{s}^{(m)}$;
    multigrid parameters $\nu_1,\nu_2$.
    \ENSURE Solution for $L^{(m)}\hat{\varphi}^{(m)} = \hat{s}^{(m)}$.
    \IF{$m = M$}
    \STATE Use bottom solver to solve the linear system
    $L^{(M)}\hat{\varphi}^{(M)} = \hat{s}^{(M)}$.
    \ELSE
    \STATE Apply smoother and LU-correction $\nu_1$ times.
    \STATE $\hat{s}^{(m+1)} \leftarrow
    \mathbf{Restrict}(\hat{s}^{(m)} - L^{(m)}\hat{\varphi}^{(m)})$.
    \STATE $\hat{\varphi}^{(m+1)} \leftarrow
    \mathbf{VCycle}(\hat{s}^{(m+1)})$.
    \STATE $\hat{\varphi}^{(m)} \leftarrow
    \hat{\varphi}^{(m)} + \mathbf{Prolong}(\hat{\varphi}^{(m+1)})$.
    \STATE Apply smoother and LU-correction $\nu_2$ times.
    \ENDIF
    \RETURN $\hat{\varphi}^{(m)}$.
	\end{algorithmic}
\end{algorithm}

In practical implementation,
it is favorable to pre-compute the permutation matrix $P$
and the LU factorization of $L_{22}^{\prime}$,
thereby avoiding repetitive executions of
LU factorization in each V-cycle iteration.
After two iterations of the smoother and LU-correction, we have
\begin{subequations}
\begin{gather}
  \hat{e}^{\prime}_1 = D^{-1}\left[(1-\omega)D + \omega
    O\right]\hat{e}_1, \label{eq:smootherResidualA} \\
  \hat{e}^{\prime}_2 = \hat{r}_2 - L_{21}\hat{\varphi}^{\prime}_1
  - L_{22}\hat{\varphi}^{\prime}_2= \mathbf{0},
  \label{eq:smootherResidualB}
\end{gather}
\end{subequations}
where $\hat{e} = [\hat{e}_1^T, \hat{e}_2^T]^T$ and its prime version
are the residuals in (\ref{eq:multigridLinearSystem}) before
and after the iteration respectively.
(\ref{eq:smootherResidualA}) illustrates that
the residuals on $\hat{\varphi}_1$ can be
well-controlled by the weighted Jacobi iteration,
while the residuals on $\hat{\varphi}_2$ are zeros after
applying the LU-correction.

Regarding the \textbf{Restrict} and \textbf{Prolong} operators,
we apply the volume weighted restriction :
\begin{equation*}
  \langle \varphi \rangle_{\lfloor \frac{\mathbf{i}}{2} \rfloor}^{(m+1)}
  = 2^{-D} \sum \limits_{\mathbf{j} \in \{0, 1\}^D}\langle
  \varphi \rangle_{\mathbf{i} + \mathbf{j}}^{(m)}
\end{equation*}
and the patch-wise constant interpolation
\begin{equation*}
  \langle \varphi \rangle_{\mathbf{i}}^{(m)}
  = \langle \varphi \rangle_{\lfloor \frac{\mathbf{i}}{2}\rfloor}^{(m+1)}
\end{equation*}
while leaving the correction and the residual
for cells in $\hat{\varphi}_2$ to zero.

At the coarsest level,
the system $L^{(M)}\hat{\varphi}^{(M)} = \hat{s}^{(M)}$
is solved using an LU solver,
with the LU factorization of $L^{(M)}$ pre-computed to optimize efficiency.

\subsection{Complexity Analysis}
\label{sec:complexity-analysis}
Here we analyze the complexity of our modified multigrid method.
The operations within Algorithm \ref{alg:V-cycle}
include application of the smoother, LU-correction,
restriction and prolongation operators on each grid.
Notably, since the cumulative complexity of 
the entire grid hierarchy is equivalent
to a constant multiple of the finest gird's complexity,
we concentrate solely on the computations on the finest grid.
Let $h = h^{(0)}$ denote the spacing of the finest grid $\Omega^{(0)}$,
and let $N = \dim \hat{\varphi}$ and $N_2 = \dim \hat{\varphi}_2$.
In three-dimensional problems,
$N = O(\frac{1}{h^3})$, $N_2 = O(\frac{1}{h^2})$ 
and $\dim \hat{\varphi}_1 = N - N_2 = O(\frac{1}{h^3})$.
\begin{itemize}
\item The \textbf{Restrict} and \textbf{Prolong} operators
  are applied to each unknown variable,
  demanding a computational cost of $O(N) = O(\frac{1}{h^3})$.
\item The Smoother \ref{smo1}
  requires $O(\frac{1}{h^3})$ computational cost 
  due to the execution of the
  $\omega$-weighted Jacobi iteration on $\hat{\varphi}_1$.
\item The LU-correction \ref{smo2} involves ND ordering and LU factorization.
  The ND ordering incurs a computational cost 
  of $O(\frac{1}{h^2}\log(\frac{1}{h}))$
  (as detailed in Section \ref{sec:nested-dissection-ordering}).
  Besides, the LU factorization of $L_{22}^{\prime}$ 
  requires $O(\frac{1}{h^3})$ cost,
  as proved below.
\end{itemize}

\begin{proposition}
  The matrix $L_{22}$ in (\ref{eq:multigridLinearSystem})
  satisfies the $\sqrt{n}$-separator
  condition, where $n=O(\frac{1}{h^2})$.
\end{proposition}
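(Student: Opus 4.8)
The plan is to analyze the undirected graph $G = (V,E)$ associated with the symmetric matrix $L_{22} + L_{22}^T$, exactly as in Section~\ref{sec:nested-dissection-ordering}: each vertex is a cell discretized by the FV-PLG scheme (a cut cell, or a nearby cell whose stencil reaches $\partial\Omega$), and an edge joins two cells whenever one appears in the other's stencil. First I would record the three structural facts that drive everything. (i) All such cells lie within an $O(h)$-thick shell around the boundary surface $\partial\Omega$, which is a fixed compact $2$-manifold of area $O(1)$; hence $|V| = n = \Theta(1/h^2)$, matching the stated $n = O(1/h^2)$. (ii) Each FV-PLG stencil contains a number of cells bounded independently of $h$, so $G$ has bounded degree and every edge joins cells at physical distance $O(h)$. (iii) Because the cells concentrate on a $2$-manifold, $G$ admits an embedding on $\partial\Omega$ (up to a bounded number of chart overlaps), so $G$ lies in the class of bounded-degree graphs embeddable on a surface of genus $g = g(\partial\Omega) = O(1)$.

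With these in hand, I would produce the partition $A,B,C$ required by Definition~\ref{def:f(n)SeparatorCondition} via a coordinate-plane sweep. Sweeping a plane $z = t$ across the domain, I would take as the separator $C$ the boundary cells lying in a slab $t \le z < t + kh$, where the width $k$ equals the (bounded) stencil reach; removing $C$ then disconnects the cells below the slab (the set $A$) from those above it (the set $B$) in $G$. The level $t$ is chosen within the balanced range where the cumulative cell count lies in $[\frac{1}{4}n, \frac{3}{4}n]$, which guarantees $|A|, |B| \le \frac{3}{4}n$, i.e. $\alpha = \frac{3}{4}$.

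The crux is bounding $|C| = O(\sqrt n)$ uniformly in the chosen $t$, and this is where I expect the main obstacle. Here I would use a coarea/averaging estimate: the total area of $\partial\Omega$ is $O(1)$ and is distributed among the $O(1/h)$ slabs, so the average surface area per slab is $O(h)$ and the corresponding average cell count per slab is $O(1/h) = O(\sqrt n)$; a pigeonhole over the balanced range then furnishes a level $t$ whose slab carries only $O(\sqrt n)$ cells. The difficulty arises precisely when $\partial\Omega$ is nearly tangent to the sweep planes, since then a single slab can contain a large surface patch and hence $\gg \sqrt n$ cells. I would resolve this by observing that such heavy slabs are necessarily rare, as their areas sum to $O(1)$, so a light balanced slab always exists; alternatively one may sweep along whichever of the three coordinate axes is most transverse to the surface, or simply invoke the $O(\sqrt{gn})$ separator theorem of Gilbert--Hutchinson--Tarjan applied to the embedding in (iii).

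Finally, since every vertex-induced subgraph of $G$ corresponds to a sub-collection of boundary cells that still embeds on $\partial\Omega$, the same construction applies verbatim to each subgraph; thus the class is closed under the subgraph relation and the $\sqrt n$-separator condition holds for all subgraphs, as Definition~\ref{def:f(n)SeparatorCondition} demands. This makes Theorem~\ref{thm:multiplicationCountOfNDSortedGraph} applicable to $L_{22}^{\prime}$ and yields the claimed $O(n^{3/2}) = O(1/h^3)$ LU cost.
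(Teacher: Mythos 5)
Your construction is, at its core, the same as the paper's. The paper's proof also takes a coordinate-aligned slab whose width equals the stencil reach as the separator: since the PLG stencil is a triangular lattice with $p+1$ distinct coordinates per direction ($p=4$ in Figure \ref{fig:plgNDSeperate}), no stencil can jump across a slab of width $p$ cells, and the paper concludes by asserting that such a slab ``owns $O(\frac{1}{h})$ cells'' while the total is $O(\frac{1}{h^2})$. Where you differ is rigor rather than route. The paper's three-sentence proof never discusses how the slab position is chosen to meet the balance requirement $|A|,|B|\le\alpha n$, never verifies closure under the subgraph relation (which Definition \ref{def:f(n)SeparatorCondition} formally requires before Theorem \ref{thm:multiplicationCountOfNDSortedGraph} applies), and simply asserts the per-slab cell count --- which is exactly the step you isolate as the crux. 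That assertion is a generic-position statement: it holds uniformly for a sphere (a zone of height $ph$ has area $O(h)$ at every latitude, poles included), but it fails verbatim if $\partial\Omega$ contains a flat patch of area $O(1)$ parallel to the slicing plane. So your identification of the tangency obstruction is a genuine observation about a gap in the paper's own argument, and your balance bookkeeping and subgraph remark fill real omissions.

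Be aware, though, that your proposed patches still leak in the same pathological cases. The averaging/pigeonhole argument needs the balanced range of sweep levels to have length bounded below by a constant; if a thin slab carries a constant fraction (more than half) of all cells --- precisely the flat-patch or folded-sheet scenario --- the balanced range collapses onto that heavy slab, and ``heavy slabs are rare'' no longer produces a light balanced one. The most-transverse-axis fallback can likewise be defeated by surfaces folded in several directions. The Gilbert--Hutchinson--Tarjan fallback is the most robust, but it relies on your embedding claim (iii), which is not automatic: PLG stencils are selected geometrically in the three-dimensional grid, so where two sheets of $\partial\Omega$ pass within $O(h)$ of each other, edges can join cells on different sheets and the graph need not embed on $\partial\Omega$; for a fixed surface such extra edges are confined to near-tangencies and can plausibly be absorbed into a bounded-genus argument, but that needs to be said and proved. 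None of this makes your proposal weaker than the paper's proof --- the paper silently assumes exactly the generic position under which your crux disappears --- but a fully airtight proof of the proposition would have to close the loop you correctly identified, most cleanly via the separator theorem for bounded-genus graph classes, which are closed under subgraphs by construction.
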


\begin{proof}
  Each row of $L_{22}$ corresponds to a cell employing discretization
  (\ref{eq:irregularLaplaceDiscretization})
  within the three-dimensional grid,
  with its nonzero entries mapping to cells in the PLG stencil.
  Since the PLG stencil is a triangular lattice 
  with $p+1$ distinct coordinates,
  the grid (or graph) can be partitioned into 
  two independent parts by a slicing of width $p$,
  where $p$ is the degree of the fitted polynomial.
  A representative example is illustrated in 
  Figure \ref{fig:plgNDSeperate} with $p=4$.
  A slice with width $p$ owns $O(\frac{1}{h})$ cells,
  while the total number of cells 
  corresponding to $L_{22}$ is $O(\frac{1}{h^2})$,
  thereby $L_{22}$ satisfies $\sqrt{n}$-separator condition 
  with $n=O(\frac{1}{h^2})$.
\end{proof}
\begin{figure}[h]
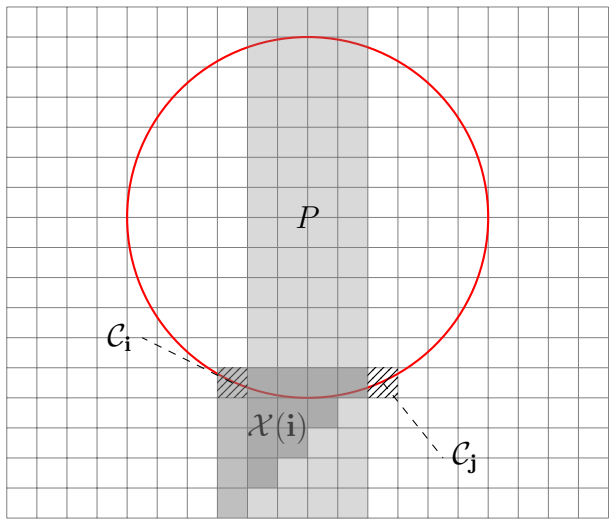

	\centering
  \includestandalone{{\TIKZDIR}PLGNDSeperate}
	\caption{Illustration of $\sqrt{n}$-separator:
    in a projection onto the $xy$-plane,
    the separator $P$, which is a split with width $4$,
    effectively isolates any cell $\mathcal{C}_{\mathbf{j}}$
    in the right-hand part from belonging to the stencil of
    any cell $\mathcal{C}_{\mathbf{i}}$ in the left-hand part
    (i.e., $\mathcal{X}(\mathbf{i})$).
    Consequently, $P$ acts as a separator dividing the domain
    into two independent regions.
  }
	\label{fig:plgNDSeperate}
\end{figure}

By Theorem \ref{thm:multiplicationCountOfNDSortedGraph},
the LU factorization of the reordered matrix $L_{22}^{\prime}$ incurs a
computational cost of $O(N_2^{\frac{3}{2}}) = O(\frac{1}{h^3})$
by applying the ND ordering in Algorithm \ref{alg:NDOrder} to $L_{22}$.
Figure \ref{fig:Dissect} illustrates the
visual sparse structure of the reordered
matrices $L_{22}^{\prime}$'s resulting from actual computations.
Actually, $L_{22}^{\prime}$'s are
recursively divided into separate sub-blocks,
significantly reducing the number
of fill-ins during Gaussian elimination.

\begin{figure}[htp]
  \centering
  \includegraphics[width=\textwidth]{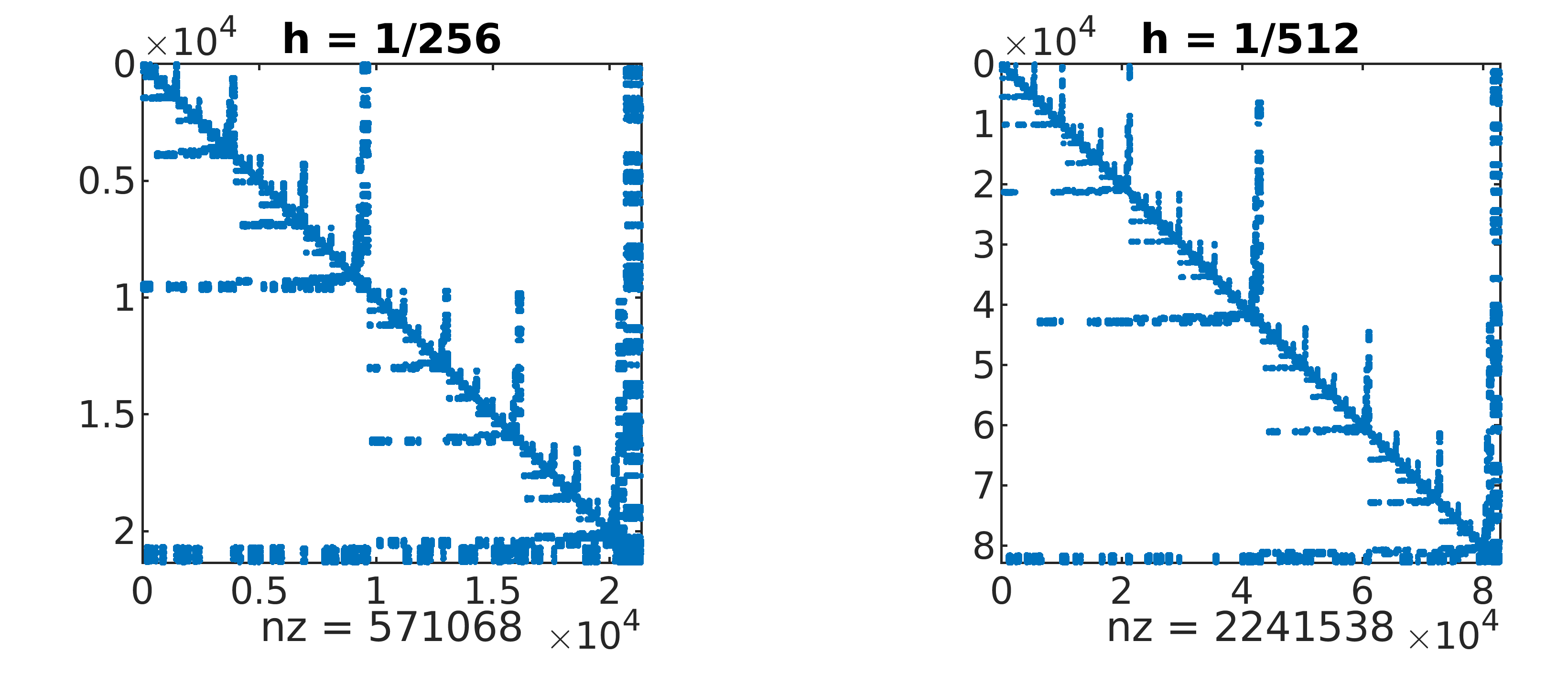}
  \caption{Patterns of nonzero elements in $L_{22}$ after employing
    nested dissection ordering:
    the blue sections indicate the positions of nonzero elements,
    and $nz$ represents the total count of nonzero elements in the matrix.
  }
  \label{fig:Dissect}
\end{figure}

Therefore, the overall complexity of a
single V-cycle (Algorithm \ref{alg:V-cycle}) is
$O(N) = O(\frac{1}{h^3})$, which achieves
the optimal theoretical complexity bound.
Assuming the V-cycle has a
convergence factor $\gamma$ that is independent of $h$,
reducing the solution error from $O(1)$ to $O(h^4)$
requires $O(\log(\frac{1}{h}))$ iterations.
Consequently, the cost of V-cycles is
$O(\frac{1}{h^3}\log(\frac{1}{h}))$.
Moreover, it allows a full multigrid method
(FMG, i.e., Algorithm \ref{alg:revisedFMG})
with optimal complexity $O(\frac{1}{h^3})$.

\begin{algorithm}
	\caption{FMG}
  \label{alg:revisedFMG}
	\begin{algorithmic}[1]
    \REQUIRE
    An integer $M \in \mathbb{Z}^+$ indicates the number of grid levels;
    an integer $m \in \{0,1,\cdots,M\}$ indicates the hierarchy depth;
    the discretization operators of each grid $L^{(m)}$;
    the residual of the $m$th grid $\hat{s}^{(m)}$;
    the number of V-cycles $I_{\text{V-cycle}}$;
    multigrid parameters $\nu_1,\nu_2$.
    \ENSURE Solution for $L^{(m)}\hat{\varphi}^{(m)} = \hat{s}^{(m)}$.
    \IF{$m = M$}
    \STATE Use bottom solver to solve the linear system
    $L^{(M)}\hat{\varphi}^{(M)} = \hat{s}^{(M)}$.
    \RETURN $\hat{\varphi}^{(M)}$.
    \ELSE
    \STATE $\hat{s}^{(m+1)} \leftarrow \mathbf{Restrict}(\hat{s}^{(m)})$.
    \STATE $\hat{\varphi}^{(m+1)}\leftarrow \mathbf{FMG}(\hat{s}^{(m+1)})$.
    \ENDIF
    \STATE $\hat{\varphi}^{(m)}\leftarrow \mathbf{Prolong}(\hat{\varphi}^{(m+1)})$.
    \STATE Perform $I_{\text{V-cycle}}$ V-cycles with initial guess $\hat{\varphi}^{(m)}$.
    \RETURN $\hat{\varphi}^{(m)}$.
	\end{algorithmic}
\end{algorithm}

Given a grid $\Omega^{(m)}$,
denote the linear system as $L^{(m)} \hat{\varphi}^{(m)} = \hat{s}^{(m)}$.
And let $\hat{\varphi}^{(m)}$ and $\hat{\psi}^{(m)}$
denote the exact solution and computed solution of
the linear system, respectively.

\begin{theorem}
  \label{thm:fmgAlgebraicError}
  Suppose the interpolation operator $I_{m+1}^m$ is bounded, i.e.,
  \begin{equation*}
    \exists C > 0, \forall \phi^{(m+1)},
    \|I_{m+1}^m \phi^{(m+1)}\| \leq C \|\phi^{(m+1)}\|,
  \end{equation*}
  and there exists a constant $K \in \mathbb{R}^+$ independent of the grid
  size such that
  \begin{equation*}
    \label{eq:multigridErrorAssumption}
    \|I_{m+1}^m \hat{\varphi}^{(m+1)} - \hat{\varphi}^{(m)} \| \leq K h^p,
  \end{equation*}
  where $h=h^{(m)}$ is the grid size of $\Omega^{(m)}$, and $p$ is
  the order of accuracy of the discrete Laplacian.
  Then a single FMG cycle (Algorithm \ref{alg:revisedFMG}),
  with an appropriate constant $I_{\text{V-cycle}}$,
  reduces the algebraic error from $O(1)$ to $O(h^p)$, i.e.,
  \begin{equation}
    \label{eq:fmgAlgebraicError}
    \|\mathbf{e}^{(m)}\| \leq Kh^p.
  \end{equation}
\end{theorem}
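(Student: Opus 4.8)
The plan is to establish (\ref{eq:fmgAlgebraicError}) by downward induction on the grid level $m$, running from the coarsest grid $m=M$ to the finest grid $m=0$ so as to mirror the recursive descent of Algorithm \ref{alg:revisedFMG}. Writing $\mathbf{e}^{(m)} := \hat{\varphi}^{(m)} - \hat{\psi}^{(m)}$ for the algebraic error on grid $m$, the induction hypothesis will be that the computed coarse solution satisfies $\|\mathbf{e}^{(m+1)}\| \leq K (h^{(m+1)})^p$. The base case $m=M$ is immediate: the bottom solver returns the exact solution, so $\mathbf{e}^{(M)} = \mathbf{0}$ and the bound holds trivially.

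For the inductive step I would first bound the error of the prolonged initial guess $\tilde{\varphi}^{(m)} := I_{m+1}^m \hat{\psi}^{(m+1)}$ that Algorithm \ref{alg:revisedFMG} feeds into the V-cycles. Inserting $\pm I_{m+1}^m \hat{\varphi}^{(m+1)}$ and applying the triangle inequality splits $\|\tilde{\varphi}^{(m)} - \hat{\varphi}^{(m)}\|$ into two terms. The first, $\|I_{m+1}^m(\hat{\psi}^{(m+1)} - \hat{\varphi}^{(m+1)})\|$, is controlled by the boundedness hypothesis on $I_{m+1}^m$ together with the induction hypothesis, giving at most $C K (h^{(m+1)})^p = 2^p C K (h^{(m)})^p$ after using $h^{(m+1)} = 2 h^{(m)}$. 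The second term is precisely the approximation property (\ref{eq:multigridErrorAssumption}), bounded by $K (h^{(m)})^p$. Hence the prolonged initial guess already carries an error of at most $(2^p C + 1) K (h^{(m)})^p$, of the target order $O((h^{(m)})^p)$.

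Next I would invoke the uniform V-cycle contraction: since each application of Algorithm \ref{alg:V-cycle} reduces the algebraic error by a fixed factor $\gamma < 1$ independent of $h$, performing $I_{\text{V-cycle}}$ cycles from $\tilde{\varphi}^{(m)}$ leaves an error of at most $\gamma^{I_{\text{V-cycle}}} (2^p C + 1) K (h^{(m)})^p$. To reproduce the bound with the \emph{same} constant $K$, it suffices to fix $I_{\text{V-cycle}}$ large enough that $\gamma^{I_{\text{V-cycle}}} (2^p C + 1) \leq 1$; since $\gamma$, $C$ and $p$ are all $h$-independent, such a constant exists, which is exactly the ``appropriate constant $I_{\text{V-cycle}}$'' of the statement. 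This closes the induction, and taking $m = 0$ yields (\ref{eq:fmgAlgebraicError}) on the finest grid.

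The main obstacle I anticipate is not any single estimate but the bookkeeping that lets the induction close with the unchanged constant $K$: the level-to-level amplification factor $2^p C + 1$, combining the mesh-ratio penalty $2^p$ with the interpolation bound $C$, must be absorbed by a \emph{constant} number of V-cycles. This in turn rests entirely on the contraction factor $\gamma$ being bounded away from $1$ uniformly in $h$, which is the genuinely substantive input for the present scheme: here it relies on the LU-correction driving the $\hat{\varphi}_2$-residual to zero (cf.\ (\ref{eq:smootherResidualB})) so that the modified smoother retains multigrid-quality spectral behavior near the irregular boundary. Once this uniform $\gamma$ is granted, the remainder is a routine triangle-inequality argument built on the two stated hypotheses.
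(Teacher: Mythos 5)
Your proposal is correct and follows essentially the same route as the paper's own proof: the same coarse-to-fine induction, the same insertion of $\pm I_{m+1}^m \hat{\varphi}^{(m+1)}$ with the triangle inequality, the same $(1+C2^p)Kh^p$ bound on the prolonged initial guess, and the same conclusion that a constant number of V-cycles absorbs this factor. If anything, you are slightly more careful than the paper, since you state explicitly the $h$-independent V-cycle contraction factor $\gamma$ and the condition $\gamma^{I_{\text{V-cycle}}}(2^pC+1)\leq 1$, which the paper invokes only implicitly in the phrase ``constant times of V-cycle is enough.''
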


\begin{proof}
  We prove (\ref{eq:fmgAlgebraicError}) by induction.
  On the coarsest grid, FMG is exact and thus (\ref{eq:fmgAlgebraicError})
  holds for the induction basis.
  For the induction hypothesis, we assume that the linear system on $\Omega^{(m+1)}$
  has been solved to the level of discretization error so that
    \begin{equation*}
      \|\mathbf{e}^{(m+1)}\| \leq K(2h)^p.
    \end{equation*}
    Hence, the initial algebraic error on $\Omega^{(m)}$ is
    \begin{equation*}
      \mathbf{e}_0^{(m)} = I_{m+1}^m \hat{\psi}^{(m+1)} - \hat{\varphi}^{(m)},
    \end{equation*}
    which yields
    \begin{align*}
      \|\mathbf{e}_0^{(m)}\|
      &\leq \|I_{m+1}^m \hat{\psi}^{(m+1)} - I_{m+1}^m \hat{\varphi}^{(m+1)}\|
        + \|I_{m+1}^m \hat{\varphi}^{(m+1)} - \hat{\varphi}^{(m)}\|\\
      &\leq C\|\hat{\psi}^{(m+1)} - \hat{\varphi}^{(m+1)}\|
        + \|I_{m+1}^m \hat{\varphi}^{(m+1)} - \hat{\varphi}^{(m)}\|\\
      &\leq CK(2h)^p + Kh^p = (1 + C2^p)Kh^p.
    \end{align*}
    Since $1 + C2^p$ is a constant,
    constant times of V-cycle is enough to reduce
    $\|\mathbf{e}_0^{(m)}\|$ to less than $Kh^p$.
  \end{proof}

  \begin{corollary}
    Under the assumptions of Theorem \ref{thm:fmgAlgebraicError},
    for any $\epsilon > 0$,
    Algorithm \ref{alg:revisedFMG},
    with an appropriate constant $I_{\text{V-cycle}}$,
    can reduce the algebraic error from $O(1)$ to $\epsilon$
    with a complexity of $O(\frac{1}{h^3})$.
  \end{corollary}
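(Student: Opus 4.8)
The plan is to combine Theorem \ref{thm:fmgAlgebraicError} with the per-V-cycle cost bound $O(1/h^3)$ established in the complexity analysis, and to exploit the $h$-independence of the V-cycle convergence factor $\gamma$. First I would invoke Theorem \ref{thm:fmgAlgebraicError}: with the constant $I_{\text{V-cycle}}$ furnished there, one FMG sweep drives the finest-grid algebraic error down to $\|\mathbf{e}^{(0)}\| \le K h^p$. Since a V-cycle whose top level is $\Omega^{(m)}$ costs $O(1/(h^{(m)})^3) = O(2^{-3m}/h^3)$ and FMG performs a fixed number of V-cycles per level, the total FMG cost is the geometric sum $\sum_{m=0}^{M} O(2^{-3m}/h^3) = O(1/h^3)$, dominated by the finest grid.

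The remaining issue is that the prescribed tolerance $\epsilon$ may be smaller than the post-FMG error $K h^p$. To handle this I would perform additional full V-cycles on the finest grid. Each such V-cycle contracts the algebraic error by the fixed factor $\gamma \in (0,1)$, so starting from $K h^p$ the number $n_\epsilon$ of extra V-cycles needed to reach $\|\mathbf{e}^{(0)}\| \le \epsilon$ satisfies $\gamma^{n_\epsilon} K h^p \le \epsilon$, i.e. $n_\epsilon = O(\log(K h^p/\epsilon))$. Because $h$ is bounded above by the grid-width constraint $h < \|\Omega\|^{1/3}$, we have $h^p \le \|\Omega\|^{p/3}$, whence $n_\epsilon \le \frac{\ln(K \|\Omega\|^{p/3}/\epsilon)}{\ln(1/\gamma)}$ is a constant depending on $\epsilon$ but independent of $h$. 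Equivalently, one may simply enlarge the constant $I_{\text{V-cycle}}$ in Algorithm \ref{alg:revisedFMG} to absorb these extra sweeps, which is exactly the ``appropriate constant'' the statement permits.

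Assembling the cost, the FMG sweep contributes $O(1/h^3)$ and the $n_\epsilon$ additional finest-grid V-cycles contribute $n_\epsilon \cdot O(1/h^3)$. Since $n_\epsilon$ is constant in $h$, the total is $O((1+n_\epsilon)/h^3) = O(1/h^3)$, which is the claimed optimal complexity; the $\log(1/\epsilon)$ dependence lives entirely inside the $h$-independent constant and never touches the $h$-scaling.

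The main obstacle is precisely the claim that the number of additional V-cycles is independent of $h$. This rests on two facts: the single-V-cycle convergence factor $\gamma$ is uniform in $h$ (the standard multigrid hypothesis already used in the complexity analysis and underlying Theorem \ref{thm:fmgAlgebraicError}), and the post-FMG error $K h^p$ is uniformly bounded because $h$ ranges over the bounded set $(0,\|\Omega\|^{1/3})$. Granting these, the reduction from $O(1)$ to $\epsilon$ is accomplished in $O(1/h^3)$ work.
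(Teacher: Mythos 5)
The paper states this corollary without an explicit proof, treating it as an immediate consequence of Theorem \ref{thm:fmgAlgebraicError} and the preceding complexity analysis; your argument is a correct elaboration of exactly that intended route, combining the $O(h^p)$ post-FMG error, the geometric summation of per-level V-cycle costs to $O(\frac{1}{h^3})$, and the $h$-independent contraction factor $\gamma$. Your key supporting observation --- that the number of extra V-cycles needed to pass from $Kh^p$ to $\epsilon$ is bounded independently of $h$ because $h$ ranges over a bounded set, so the $\log(1/\epsilon)$ dependence is absorbed into the constant $I_{\text{V-cycle}}$ --- is sound and is precisely what the phrase ``with an appropriate constant $I_{\text{V-cycle}}$'' in the statement is meant to license.
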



\section{Numerical Tests}
\label{sec:numerical-tests}

In this section,
we demonstrate the accuracy and efficiency of our method
by addressing various problems in three-dimensional irregular domains.

\subsection{Geometry Accuracy Tests}
\label{sec:geometry-accuracy-tests}

We first conduct tests on
the accuracy associated with the surface fitting
described in Section~\ref{sec:geometric-characterization}.
We implement the Yin set of the analytic sphere,
which is regarded as the exact boundary here,
and compare it with the surface generated via least squares fitting.
The error norms are defined as
\begin{equation}
  \label{eq:cellFaceAverageNorm}
  \|\mathbf{u}\|_p =
  \begin{cases}
    \left( \frac{1}{N} \sum |\mathbf{u}_i|^p \right)^{\frac{1}{p}}
    &\text{if}\  p = 1,2;\\
    \max |\mathbf{u}_i|
    & \text{if}\ p = \infty,
  \end{cases}
\end{equation}
where $\mathbf{u}$ is a vector with $N$ elements.

Consider a sphere centered at $(0.5, 0.5, 0.5)$ with a radius of $0.2$.
Let $u: \mathbb{R}^3 \rightarrow \mathbb{R}$ be defined by
\begin{equation*}
  u(x,y,z) = 10 x \cdot \sin(y) \cdot e^z.
\end{equation*}
Recalling the descriptions in Section
\ref{sec:geometric-characterization},
we calculate the errors of the
cell-averaged values and face-averaged values
of $u$ associated with $V_f, V_p$ and $S_f, S_p$.
The numerical results presented in Table
\ref{tab:cellAverageErrorsOfSphereWithRadius0.2}
demonstrate that this approximation method achieves $O(h^3)$ accuracy.
The error norms are calculated based on
the error vector of all cut cells
using (\ref{eq:cellFaceAverageNorm}).

\begin{table}[!ht]
  \centering
  \caption{Cell-average and face-average errors of sphere with a radius of $0.2$.}
  \label{tab:cellAverageErrorsOfSphereWithRadius0.2}
  \setlength{\tabcolsep}{3mm}{
    \begin{tabular}{c|c|c|c|c|c|c|c}
      \hline
      \multicolumn{8}{c}{Cell-average errors} \\ \hline
      & $h=\frac{1}{64}$ & rate & $h=\frac{1}{128}$ & rate & $h=\frac{1}{256}$ & rate & $h=\frac{1}{512}$ \\ \hline
      $L^\infty$ &   1.50e-04 &       3.62 &   1.22e-05 &       2.37 &   2.37e-06 &       3.01 &   2.95e-07 \\ \hline
     $L^1$ &   2.50e-05 &       3.41 &   2.36e-06 &       3.12 &   2.73e-07 &       3.02 &   3.36e-08 \\ \hline
     $L^2$ &   4.07e-05 &       3.47 &   3.67e-06 &       3.07 &   4.37e-07 &       3.03 &   5.34e-08 \\ \hline
      \multicolumn{8}{c}{Face-average errors} \\ \hline
      & $h=\frac{1}{64}$ & rate & $h=\frac{1}{128}$ & rate & $h=\frac{1}{256}$ & rate & $h=\frac{1}{512}$ \\ \hline
      $L^\infty$ &   2.22e-06 &       3.42 &   2.08e-07 &       3.26 &   2.17e-08 &      -0.90 &   4.04e-08 \\ \hline
     $L^1$ &   1.31e-07 &       3.37 &   1.26e-08 &       3.23 &   1.35e-09 &       2.75 &   1.99e-10 \\ \hline
     $L^2$ &   2.74e-07 &       3.49 &   2.43e-08 &       3.21 &   2.63e-09 &       2.10 &   6.13e-10 \\ \hline
    \end{tabular}
  }
\end{table}


\subsection{Convergence Tests}

Define the $L^p$ norms as follows:
\begin{equation*}
  \|u\|_p =
  \begin{cases}
    \left( \frac{1}{\|\Omega\|} \sum \|\mathcal{C}_{\mathbf{i}}\|
    \cdot |\langle u\rangle_{\mathbf{i}}|^p \right)^{\frac{1}{p}} & \text{if}\ p = 1,2;\\
    \max |\langle u \rangle_{\mathbf{i}}|  & \text{if}\ p = \infty,
  \end{cases}
\end{equation*}
where the summation and the maximum are taken over the non-empty cells
inside the computational domain.

\subsubsection{Problem1: Sphere Domains}

Consider a problem \cite[Example 5]{gibou2002second}
involving Poisson's equation within a sphere domain,
which centers at $(0.5,0.5,0.5)$ with a radius of 0.3.
The exact solution is given by
\begin{equation*}
  u(x,y,z) = e^{-x^2 - y^2 - z^2}.
\end{equation*}
Dirichlet boundary conditions are applied on all boundary surfaces,
and the unknowns are defined as cell-averaged values.
The solution errors are presented in 
Table \ref{tab:solutionErrorOfSphereWithRadius0.3}.

\begin{table}[!ht]
  \centering
  \caption{Solution errors of sphere with a radius of $0.3$.}
  \label{tab:solutionErrorOfSphereWithRadius0.3}
  \setlength{\tabcolsep}{3mm}{
    \begin{tabular}{c|c|c|c|c|c}
      \hline
      \multicolumn{6}{c}{Solution of the method in \cite{gibou2002second}} \\ \hline
       &       $h = \frac{1}{25}$ &       rate &       $h = \frac{1}{50}$ &       rate &      $h = \frac{1}{100}$ \\ \hline
      $L^\infty$ &   2.27e-04 &       2.12 &   5.20e-05 &       1.99 &   1.31e-05 \\ \hline
     $L^1$ &   6.39e-05 &       1.96 &   1.64e-05 &       2.03 &   4.00e-06 \\ \hline
      \multicolumn{6}{c}{Solution of current method} \\ \hline
       &       $h = \frac{1}{25}$ &       rate &       $h = \frac{1}{50}$ &       rate &      $h = \frac{1}{100}$ \\ \hline
      $L^\infty$ &   6.33e-07 &       4.41 &   2.98e-08 &       4.21 &   1.61e-09 \\ \hline
     $L^1$ &   1.01e-08 &       3.33 &   1.00e-09 &       4.37 &   4.84e-11 \\ \hline
     $L^2$ &   4.15e-08 &       3.50 &   3.67e-09 &       4.35 &   1.80e-10 \\ \hline
    \end{tabular}
  }
\end{table}

\begin{figure}[htp]
  \centering
  \begin{subfigure}[t]{0.48\textwidth}
    \includegraphics[width=0.98\textwidth]{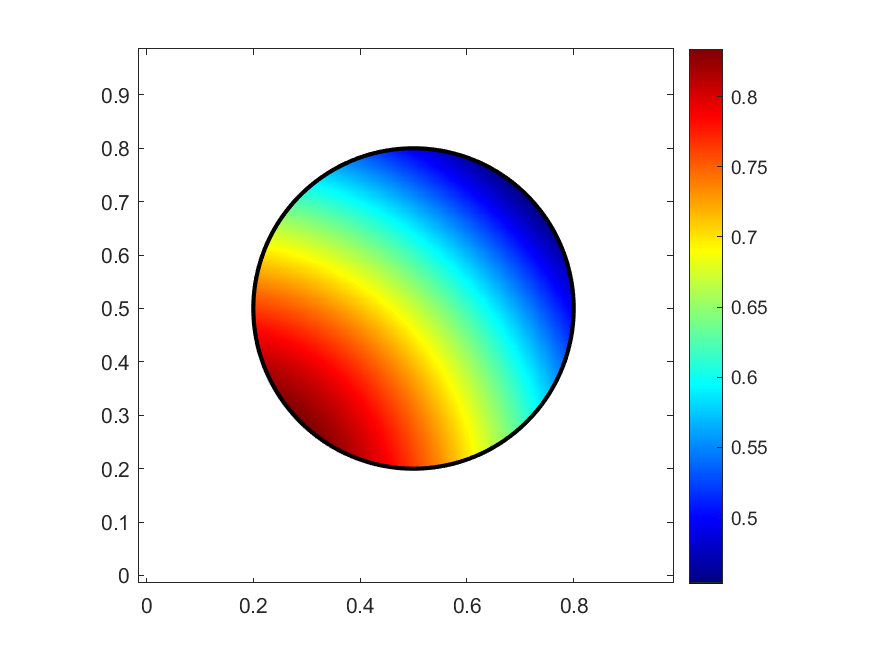}
    \caption{Numerical solution}
    \label{fig:sphereSolutuion}
  \end{subfigure}
  \begin{subfigure}[t]{0.48\textwidth}
    \includegraphics[width=0.98\textwidth]{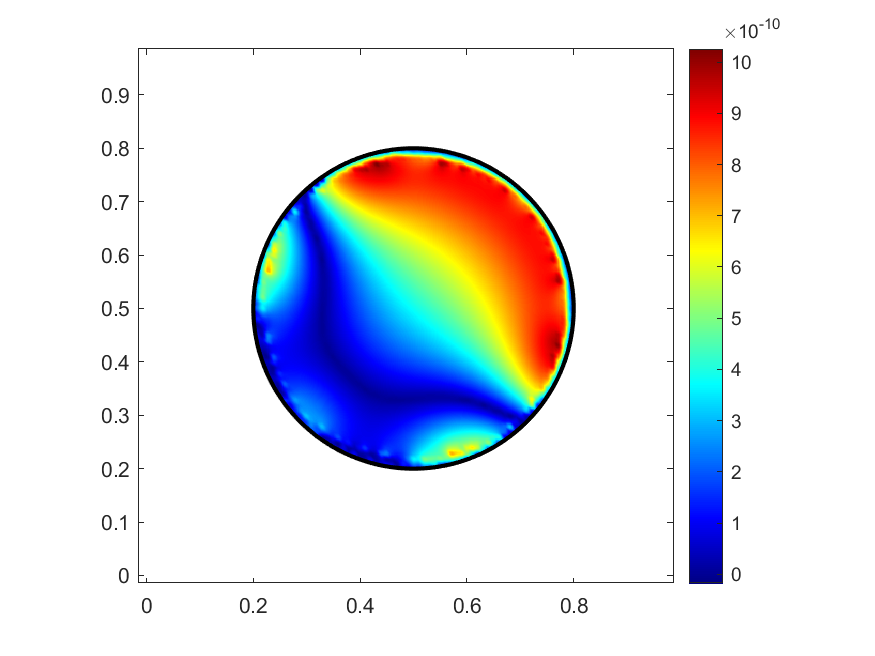}
    \caption{Absolute values of errors}
    \label{fig:sphereError}
  \end{subfigure}
  \caption{Solution and solution errors for sphere 
    with $R = 0.3$, $z = 0.5$, $h = \frac{1}{100}$.}
\end{figure}

\subsubsection{Problem2: Torus Domains}

Consider solving Poisson's equation in
the irregular problem domain $\Omega = B \backslash \Omega_1$,
where $B$ is the unit cube $[0,1]^3$,
and $\Omega_1$ is a torus centered at $(0.5, 0.5, 0.5)$ with a major radius $R = 0.2$
and a minor radius $r = 0.1$.
Unknowns are defined as face-averaged values.
Dirichlet boundary conditions are imposed on the regular boundary surfaces,
and Neumann boundary conditions are imposed on the irregular boundary surfaces.
All the boundary condition values are derived
from the exact solution:
\begin{equation*}
  u(x,y,z) = \cos(\pi x) \cos(\pi y) \sin(\pi z).
\end{equation*}
The truncation errors and solution errors are listed in
Table \ref{tab:errorOfTorusWithR0.2r0.1}.

\begin{table}[!ht]
  \centering
  \caption{Truncation errors and solution errors of torus with $R = 0.2, r = 0.1$.}
  \label{tab:errorOfTorusWithR0.2r0.1}
  \setlength{\tabcolsep}{3mm}{
    \begin{tabular}{c|c|c|c|c|c|c|c}
      \hline
      \multicolumn{8}{c}{Truncation errors} \\ \hline
      & $h=\frac{1}{64}$ & rate & $h=\frac{1}{128}$ & rate & $h=\frac{1}{256}$ & rate & $h=\frac{1}{512}$ \\ \hline
    $L^\infty$ &   9.65e-04 &       1.80 &   2.77e-04 &       2.75 &   4.10e-05 &       2.57 &   6.90e-06 \\ \hline
     $L^1$ &   3.10e-06 &       4.04 &   1.88e-07 &       3.98 &   1.19e-08 &       3.99 &   7.51e-10 \\ \hline
     $L^2$ &   2.44e-05 &       3.34 &   2.41e-06 &       3.51 &   2.12e-07 &       3.51 &   1.86e-08 \\ \hline
      \multicolumn{8}{c}{Solution errors} \\ \hline
      & $h=\frac{1}{64}$ & rate & $h=\frac{1}{128}$ & rate & $h=\frac{1}{256}$ & rate & $h=\frac{1}{512}$ \\ \hline
      $L^\infty$ &   1.97e-07 &       3.75 &   1.47e-08 &       3.75 &   1.09e-09 &       4.14 &   6.22e-11 \\ \hline
     $L^1$ &   7.95e-09 &       3.99 &   5.02e-10 &       3.89 &   3.39e-11 &       3.98 &   2.15e-12 \\ \hline
     $L^2$ &   1.75e-08 &       3.91 &   1.17e-09 &       3.87 &   7.98e-11 &       3.98 &   5.04e-12 \\ \hline
    \end{tabular}
  }
\end{table}

\begin{figure}[htp]
  \centering
  \begin{subfigure}[t]{0.48\textwidth}
    \includegraphics[width=0.88\textwidth]{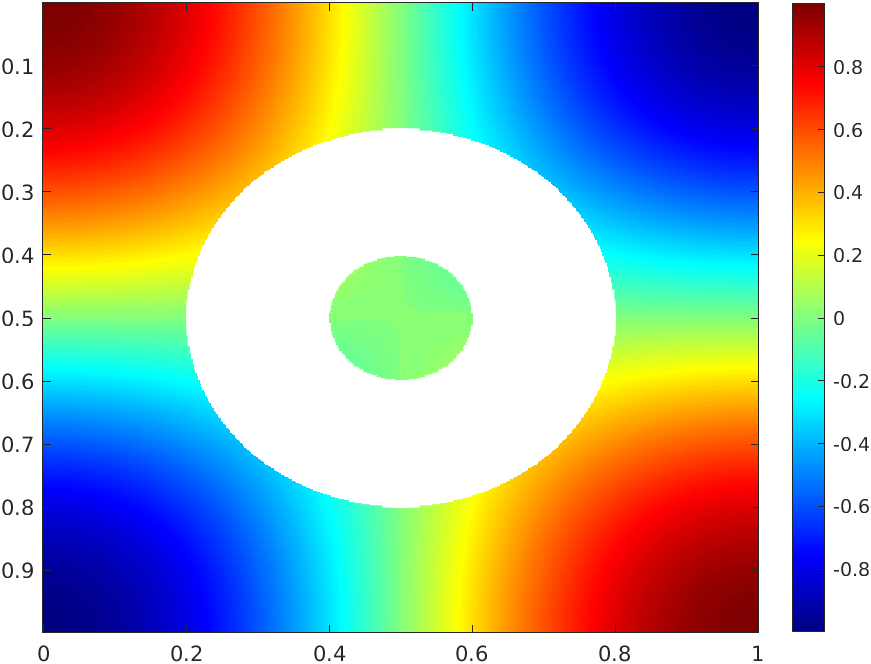}
    \caption{Numerical solution}
    \label{fig:torusSolutuion}
  \end{subfigure}
  \begin{subfigure}[t]{0.48\textwidth}
    \includegraphics[width=0.9\textwidth]{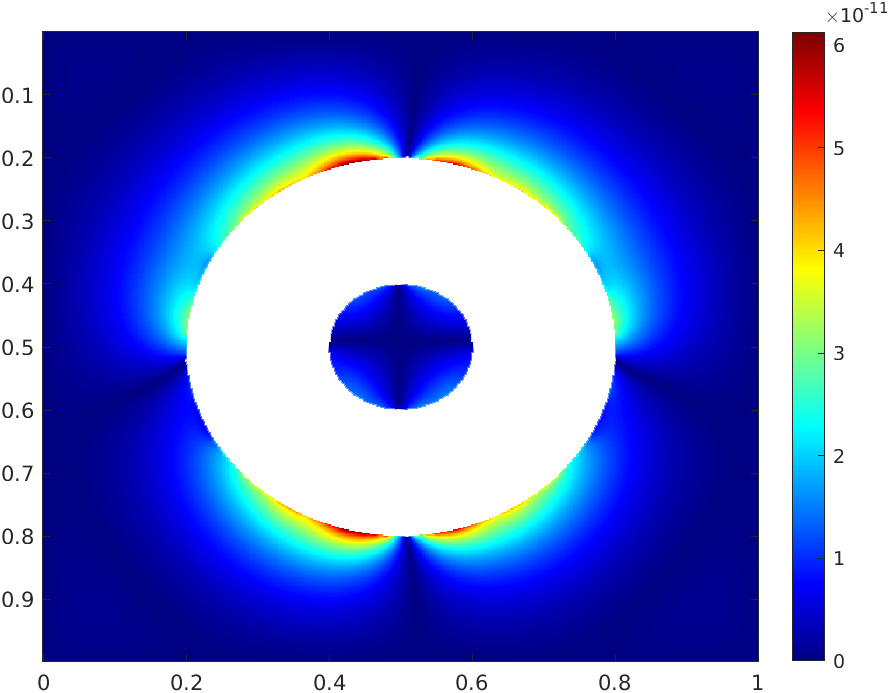}
    \caption{Absolute values of solution errors}
    \label{fig:torusError}
  \end{subfigure}
  \caption{Solution and solution errors for torus 
    with $R = 0.2, r = 0.1$, $z = 0.5$, $h = \frac{1}{512}$.}
\end{figure}

\subsection{Efficiency}
\label{sec:efficiency}
We evaluate the reduction in relative residuals
and the time consumption of
Algorithm \ref{alg:multigrid}.
Figure \ref{fig:reductionOfResidual}
illustrates the reduction of relative residuals
during the solution of Problem 2.
Table \ref{tab:solvingTimeTests}
presents the time consumption of each part of the solution procedure.
The results demonstrate
that the time complexity for both the second and third parts
grows almost cubically.
In summary, the proposed multigrid algorithm efficiently
solves Poisson's equations in complex geometries.

\begin{figure}[h]
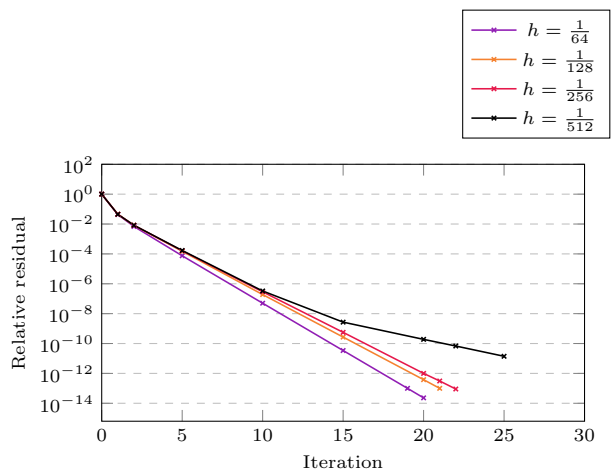

		\centering
		\resizebox{0.8\columnwidth}{!}{
		}
      \includestandalone{{\TIKZDIR}ReductionOfRelativeResidual}
      \caption{Reduction of the relative residual
        ($\frac{\|\hat{s}^{(0)}\|}{\|\hat{r}\|}$ in Algorithm \ref{alg:multigrid}.)
        in Problem 2. The initial guess is the zero function.
        The multigrid parameters are $\omega =0.5$, $\nu_1 = \nu_2 =3$.
      }
		\label{fig:reductionOfResidual}
	\end{figure}

\begin{table}[!ht]
  \centering
  \caption{Time consumption of each stage in the solution procedure.
    The first part "Setup of bottom solver"
    refers to the LU factorization
    of $L^{(M)}$.
    The second part "Setup of LU-correction"
    involves the
    LU factorization of $L^{(m)}_{22}, m=0,\cdots,M-1$.
    After these pre-computations,
    the third part
    "Multigrid solution" follows Algorithm \ref{alg:multigrid}.
    All the tests are run on an AMD Ryzen R9-7950X at 4.5GHz computer
    using single thread,
    and the ND ordering algorithm and LU factorization are
    implemented by Metis
    \cite{karypis1998fast} and PETSc \cite{petsc-user-ref,petsc-web-page}.
  }
  \label{tab:solvingTimeTests}
  \setlength{\tabcolsep}{2.5mm}{
    \begin{tabular}{c|c|c|c|c|c|c|c}
      \hline
      \multicolumn{8}{c}{Solving time for the unit cube with an excluded sphere with $r = 0.3$} \\ \hline
      & $h=\frac{1}{64}$ & rate & $h=\frac{1}{128}$ & rate & $h=\frac{1}{256}$ & rate & $h=\frac{1}{512}$ \\ \hline
        Setup of bottom solver& 8.25  & ~ & 8.10  & ~ & 7.96  & ~ & 7.90  \\ \hline
        Setup of LU-correction& 0.70  & 3.09  & 5.94  & 3.37  & 61.57  & 2.99  & 489.50  \\ \hline
        Multigrid solution& 5.72  & 2.11  & 24.76  & 2.56  & 146.46  & 3.24  & 1385.97 \\ \hline
      \multicolumn{8}{c}{Solving time for the unit cube with an excluded torus with $R = 0.2, r = 0.1$} \\ \hline
      & $h=\frac{1}{64}$ & rate & $h=\frac{1}{128}$ & rate & $h=\frac{1}{256}$ & rate & $h=\frac{1}{512}$ \\ \hline
        Setup of bottom solver& 10.71  & ~ & 10.73  & ~ & 10.52  & ~ & 10.25  \\ \hline
        Setup of LU-correction& 0.43  & 2.90  & 3.21  & 3.26  & 30.85  & 3.15  & 273.04  \\ \hline
        Multigrid solution& 7.32  & 2.82  & 51.60  & 2.88  & 380.84  & 3.01  & 3073.53 \\ \hline
    \end{tabular}
  }
\end{table}


\section{Conclusions}
\label{sec:conclusions}

We have proposed a fourth-order cut-cell method for
solving Poisson's equations in three-dimensional
irregular domains.
Firstly, we use least squares method and technique of Yin space
to characterize arbitrarily complex geometries,
and design an effective merging algorithm for small cells.
Secondly, the FV-PLG algorithm and finite volume method
are applied to derive the high-order discretization of
the Laplacian operator.
Finally, an efficient multigrid algorithm is designed,
which achieves optimal complexity by employing the ND ordering.
The accuracy and efficiency of our method
are demonstrated by numerous numerical tests.

Prospects for future research are as follows.
First, we expect a better boundary geometric representation
which guarantees high-order approximation and
global smoothness by conformal geometry \cite{jin2018conformal}.
Second, we also plan to develop a fourth-order INSE
solver with optimal complexity in three-dimensional irregular domains
based on the GePUP formulation \cite{zhang2016gepup}.



\newpage

\bibliographystyle{siamplain}
\bibliography{references}

\newpage

\end{document}